\newcommand{\mfg}{\mathfrak{g}}
\newcommand{\R}{\mathbb{R}}
\newcommand{\Z}{\mathbb{Z}}
\newcommand{\mcV}{\mathcal{V}}
\newcommand{\T}{\mathbb{T}}
\newcommand{\C}{\mathbb{C}}
\newcommand{\mbP}{\mathbb{P}}
\newcommand{\mcL}{\mathcal{L}}
\newcommand{\J}{\mathcal{J}}
\newcommand{\lb}{\llbracket}
\newcommand{\rb}{\rrbracket}
\newcommand{\mfB}{\mathfrak{B}}
\newcommand{\mfF}{\mathfrak{F}}
\newcommand\partto{\mathrel{\raisebox{0.08em}{\scalebox{0.67}{$\supset$}}\hspace{-.415em}\to}}
\numberwithin{equation}{section}
\theoremstyle{plain}
\newtheorem{thm}{Theorem}[section]
\newtheorem{prop}[thm]{Proposition}
\theoremstyle{definition}
\newtheorem{lem}[thm]{Lemma}
\newtheorem{defn}[thm]{Definition}
\newtheorem{defn/thm}[thm]{Definition/Theorem}
\newtheorem{ex}[thm]{Example}
\theoremstyle{remark}
\newtheorem{rem}[thm]{Remark}
\begin{document}

\title{Blow-ups in generalized complex geometry}

\author{M.Bailey \thanks{{\tt M.A.Bailey@uu.nl}}  , G.R.Cavalcanti \thanks{{\tt G.R.Cavalcanti@uu.nl}} , J.L.van der Leer Dur\'an \thanks{{\tt J.L.vanderLeerDuran@uu.nl}} \\
       Department of Mathematics\\
Utrecht University\\
}
\date{\vspace{-5ex}}

\maketitle

\abstract{}
\noindent
We study blow-ups in generalized complex geometry. To that end we introduce the concept of holomorphic ideal, which allows one to define a blow-up in the category of smooth manifolds. We then investigate which generalized complex submanifolds are suitable for blowing up. Two classes naturally appear; generalized Poisson submanifolds and generalized Poisson transversals, submanifolds which look complex, respectively symplectic in transverse directions. We show that generalized Poisson submanifolds carry a canonical holomorphic ideal and give a necessary and sufficient condition for the corresponding blow-up to be generalized complex. For the generalized Poisson transversals we give a normal form for a neighborhood of the submanifold, and use that to define a generalized complex blow-up, which is up to deformation independent of choices. 
\vskip12pt
\noindent

\tableofcontents

\section{Introduction}

The notion of blowing up was invented by algebraic geometers in the study of birational transformations. Although it is unclear to the authors when and by whom precisely the notion of blowing up was invented, Zariski \cite{MR0006451} introduced it in a modern language and used it to study singularities. This work culminated in results by Abhyankar and Hironaka on resolutions of singularities in all dimensions. Later Hopf \cite{MR0068008} introduced the corresponding notion in the context of complex analytic geometry. Blowing up a submanifold preserves the class of K\"ahler manifolds and it was pointed out by Gromov in  \cite{MR864505} that it can be defined in the symplectic category as well. This was then used by McDuff in \cite{MR772133} to produce examples of simply-connected non-K\"ahlerian symplectic manifolds. 
\newline
\newline
\noindent
The fact that blow-ups exist in both complex and symplectic geometry naturally raises the question whether the same is true in generalized complex geometry, a concept introduced by Hitchin and developed by Gualtieri \cite{MR2811595} and which unifies complex and symplectic structures into one framework. This question was first dealt with in \cite{MR2574746} where it was shown that a blow-up exists for a non-degenerate point of complex type in a generically symplectic $4$--manifold. This was then used to produce new examples of generalized complex structures on the manifolds $m\C\mbP^2\# n\overline{\C\mbP^2}$ for $m$ odd. 
\newline
\newline
In this paper we study blow-ups in generalized complex geometry. The first step is to understand which submanifolds are suitable for blowing up. In the complex and symplectic categories these are the complex, respectively symplectic submanifolds. There are a number of possible ways to define a generalized complex submanifold, and the one which we will use has complex and symplectic submanifolds as special examples. However, for blowing up this notion is too general and we will restrict ourselves to two special subclasses. The first are the \textsl{generalized Poisson submanifolds}, which look complex in transverse directions. Using the normal form theorem of the first author \cite{MR3128977} we prove that these submanifolds come naturally equipped with a special ideal which gives them a holomorphic flavor, and we use that to construct the blow-up as a differentiable manifold. The question of whether this blow-up has a generalized complex structure for which the blow-down map is holomorphic then boils down to the analogous question in the context of holomorphic Poisson geometry. This has been answered by Polishchuk in \cite{MR1465521} and, building on that, we give necessary and sufficient conditions for blowing up a generalized Poisson submanifold. 

The second class of submanifolds are the \textsl{generalized Poisson transversals}. They look symplectic in transverse directions and, as in the symplectic category, to blow them up we first need a normal form for the generalized complex structure in a neighborhood of the submanifold. Such a neighborhood theorem was already constructed in \cite{2013arXiv1306.6055F} in the context of Poisson geometry, and it has a direct counterpart in our setting. We then blow up the submanifold globally. An elegant way to perform this last step uses reduction methods, just as the symplectic blow-up can be performed using symplectic cuts as shown in \cite{MR1338784}. In contrast with the generalized Poisson submanifolds, the blow-up is not canonical but depends on the specific choice of neighborhood as well as the choice of level set for a specific moment map. The latter is analogous to the symplectic area of the exceptional divisor in the context of symplectic blow-ups. Finally, we show that different choices of models for a neighborhood lead to deformation equivalent blow-ups.
\newline
\newline
The paper is organized as follows: In Section \ref{12:09} we briefly review all the necessary ingredients from generalized complex geometry that are needed in the paper. Most of this material is due to \cite{MR2811595} and all statements without explicit references are from there. We then proceed in Section \ref{12:13} to the blow-up procedure. We first define the notion of \textsl{holomorphic ideal} and argue that this is the natural input to define a blow-up procedure in the category of smooth manifolds. Then, in Section \ref{complex case} we introduce generalized Poisson manifolds and explain the extra assumptions that are needed for the blow-up. In Section \ref{12:15} we define generalized Poisson transversals, give a normal form for their neighborhoods and use it to blow them up. Finally, in Section \ref{11:46:45} we discuss other types of generalized complex submanifolds and give a concrete example of one that can not be blown-up.

\section{Generalized Complex Geometry}
\label{12:09}

Let $M$ be a real $2n$-dimensional manifold equipped with a closed real $3$--form $H$. The main idea of generalized geometry is to replace the tangent bundle $TM$ by the bundle $\T M:=TM\oplus T^\ast M$. The latter carries two natural structures, the first being a fiberwise \textsl{natural pairing} 
\begin{align*}
\langle X+\xi,Y+\eta\rangle:=\frac{1}{2}(\xi(Y)+\eta(X)),
\end{align*}
\noindent
which is a non-degenerate metric of of signature $(2n,2n)$. The second is a bracket on its space of sections which replaces the Lie bracket and is called the \textsl{Courant bracket}. It is given by 
\begin{align*}
\lb X+\xi, Y+\eta \rb:= [X,Y] +\mcL_{{}_X}\eta-\iota_{{}_Y}d\xi -\iota_{{}_Y}\iota_{{}_X}H.
\end{align*}
This version of the Courant bracket is not skew-symmetric but does satisfy the Jacobi identity. 
\begin{defn}
A \textsl{generalized complex structure} on $(M,H)$ is a complex structure $\J$ on $\T M$ which is orthogonal with respect to the natural pairing and whose $(+i)$--eigenbundle $L\subset\T M_\C$ is involutive\footnote{A subbundle of $\T M$ is called involutive if its space of sections is closed with respect to the Courant bracket.}.
\end{defn}
\noindent
A Lagrangian, involutive subbundle $L\subset \T M_\C$ is also called a \textsl{Dirac structure}, and it follows from the definition that generalized complex structures correspond in a one to one fashion with Dirac structures $L$ satisfying the non-degeneracy condition $L\cap \bar{L}=0$. 

\begin{ex}
\label{11:37:30}
The main examples are provided by complex and symplectic geometry: 
\begin{align}
\J_I=
\begin{pmatrix}
-I & 0 \\
0 & I^\ast
\end{pmatrix},
\ \ \ \ \ 
\J_\omega=
\begin{pmatrix}
0 & -\omega^{-1} \\
\omega & 0
\end{pmatrix},
\label{13:31}
\end{align}
with associated Dirac structures $L_I=T^{0,1}\oplus (T^\ast)^{1,0}$ and $L_\omega=(1-i\omega)T$. Another important example is provided by a holomorphic Poisson structure $(I,\sigma)$. If $\sigma=Q-iIQ$ then 
\begin{align}
\J_\sigma=
\begin{pmatrix}
-I & 4IQ \\
0 & I^\ast
\end{pmatrix}
\label{13:30}
\end{align}
and $L_{I,\sigma}=T^{0,1}\oplus (1+\sigma)(T^\ast)^{1,0}$. In these examples the $3$--form is taken to be $0$.
\end{ex}
\noindent
A useful way to look at generalized complex structures is through spinors. There is a natural action of the Clifford algebra of $(\T M,\langle, \rangle)$ on differential forms given by 
\begin{align*}
(X+\xi)\cdot \rho=\iota_{{}_X}\rho+\xi\wedge \rho,
\end{align*} 
yielding an identification between the space of differential forms and the space of spinors for $Cl(\T M,\langle,\rangle)$. 
A line subbundle $K\subset \Lambda^\bullet T^\ast M_\C$ gives rise to an isotropic subbundle $L\subset \T M_\C$ by taking its annihilator
\[
L=\{X+\xi\in \T M_\C|(X+\xi)\cdot K=0\}.
\]
This gives rise to a one-to-one correspondence between Dirac structures $L\subset \T M_\C$ and complex line bundles $K\subset \Lambda^\bullet T^\ast M_\C$ which satisfy the following two conditions. Firstly, $K$ has to be generated by \textsl{pure spinors}, i.e. forms $\rho$ which at each point $x$ admit a decomposition
\begin{align}
\rho_x=e^{B+i\omega}\wedge \Omega
\label{13:09:56}
\end{align}
where $B+i\omega$ is a $2$--form and $\Omega$ is decomposable. This condition is equivalent to $L$ being of maximal rank. Secondly, if $\rho$ is a local section of $K$ there should exist $X+\xi\in \Gamma(\T M_\C)$ with
\[d^H\rho=(X+\xi)\cdot \rho.	\]
This condition amounts to the involutivity of $L$. The condition $L\cap \bar{L}=0$ can then be expressed in spinor language using the \textsl{Chevalley pairing}: If $\rho\in \Gamma(K)$ is non-vanishing then
\begin{align*}
L\cap \bar{L}=0 \Longleftrightarrow (\rho,\bar{\rho})_{{}_{Ch}}:=(\rho\wedge \bar{\rho}^T)_{top}\neq 0.
\end{align*}
The superscript $T$ stands for transposition, acting on a degree $l$--form by $(\beta_1 \ldots \beta_l)^T=\beta_l\ldots\beta_1$, and the subscript $top$ stands for the highest degree component. If $\rho$ is given by (\ref{13:09:56}) at a particular point $x$ then this condition becomes 
\begin{align}
\omega^{n-k}\wedge \Omega\wedge \bar{\Omega}\neq 0,
\label{14:35:04}
\end{align}
 where $2n$ is the real dimension of $M$ and $k=deg(\Omega)$. The line bundle $K$ associated to a generalized complex structure $\J$ is called the \textsl{canonical line bundle}, and the integer $k$ appearing in (\ref{14:35:04}) is called the \textsl{type} of $\J$ at $x$. Structures of type $0$ are called symplectic and those of maximal type $n$ complex\footnote{The reason being that they are equivalent to symplectic or complex structures where equivalence is defined in Definition \ref{12:24}.}. Another description of the type is as follows. Every generalized complex structure naturally induces a Poisson structure given by the composition 
\begin{align}
\pi_\J:T^\ast M\hookrightarrow \T M \xrightarrow{\J} \T M \twoheadrightarrow TM.
\label{12:20}
\end{align}
The conormal bundle to the leaves, i.e. the kernel of $\pi_\J$, is given by the complex distribution 
\begin{align*}
\nu_\J:=T^\ast M \cap \J T^\ast M.
\end{align*}
Note that $\nu_\J$ might be singular as its complex dimension can jump in even steps from one point to the next. 
The type at a point $x$ is then given by
\begin{align*}
type_x(\J)=dim_\C(\nu_\J)_x=\frac{1}{2}corank_\R(\pi_\J)_x.
\end{align*}
\noindent
Having laid out the relevant geometric structures we need to define morphisms between them.
\begin{defn}
A \textsl{generalized map} between $(M_1,H_1)$ and $(M_2,H_2)$ is a pair $\Phi:=(\varphi,B)$, where $\varphi:M_1\rightarrow M_2$ is a smooth map and $B\in \Omega^2(M_1)$ satisfies $\varphi^\ast H_2=H_1+dB$. 
\end{defn}
\noindent
We will often abbreviate $(\varphi,0)$ by $\varphi$ and drop the prefix ``generalized". An important role is played by \textsl{$B$-field transformations}, maps of the form\footnote{The minus sign is chosen so that $e^B\wedge ((X+\xi)\cdot \rho)=(e^B_\ast(X+\xi))\cdot e^B\wedge\rho$.} $(Id,-B)=:e^B_\ast$. They act on $\T M$ via
\begin{align}
e^B_\ast:X+\xi\mapsto X+\xi-\iota_XB.
\label{09:25:06}
\end{align}
Given $u\in \Gamma(\T M)$ we denote by $ad(u):\Gamma(\T M)\rightarrow \Gamma(\T M)$ the adjoint action with respect to the Courant bracket. This infinitesimal symmetry has a flow, i.e. a family of isomorphisms $\psi_t:\T M\rightarrow \T M$ with
$
d/dt (\psi_t(v))=-\lb u ,\psi_t(v) \rb.
$
If $u=X+\xi$ and $\varphi_t$ is the flow of $X$ then
\begin{align}
\psi_t=
(\varphi_t)_\ast \circ e_\ast^{-\int_0^t\varphi_r^\ast(d\xi+\iota_{{}_X}H)dr}.
\label{10:12:54}
\end{align}
A map $\Phi=(\varphi,B)$ gives rise to a correspondence:
\begin{align*}
 X +\xi \underset{\Phi}{\sim} Y+\eta \overset{\text{def}}{\Longleftrightarrow} \varphi_\ast X=Y, \ \ \xi=\varphi^\ast \eta -\iota_{{}_X}B.
\end{align*}
\begin{defn}
A map $\Phi:(M_1,H_1,\J_1)\rightarrow(M_2,H_2,\J_2)$ is called \textsl{generalized holomorphic} if 
\begin{align*}
X +\xi \underset{\Phi}{\sim} Y+\eta \Longrightarrow \J_1(X +\xi) \underset{\Phi}{\sim} \J_2(Y+\eta).
\end{align*}
It is called an \textsl{isomorphism} if it is in addition invertible. 
\label{12:24}
\end{defn}
\begin{rem}
It follows immediately from the definition that $\varphi$ is a Poisson map, i.e. $\varphi_\ast \pi_{\J_1}=\pi_{\J_2}$. This is quite restrictive, for example if the target is symplectic then $\varphi$ has to be a submersion. In the complex category we recover the usual notion of holomorphic maps.
\label{13:34}
\end{rem} 
\noindent
In case $\varphi$ is a diffeomorphism a more concrete description in terms of spinors can be given. If $K_i$ is the canonical bundle for $\J_i$, $\Phi$ being an isomorphism amounts to
\begin{align*}
K_1=e^B\wedge \varphi^\ast K_2.
\end{align*}
\noindent
We now state the analogue of the Newlander-Nirenberg and Darboux theorems in generalized complex geometry.
\begin{thm} (\cite{MR3128977}) Let $(M,H,\J)$ be a generalized complex manifold. If $x\in M$ is a point where $\J$ has type $k$ then a neighborhood of $x$ is isomorphic to a neighborhood of $(0,0)$ in 
\begin{align}
(\R^{2n-2k},\omega_{st}) \times (\C^k,\sigma)
\label{13:19}
\end{align} 
where $\omega_{st}$ is the standard symplectic form, $\sigma$ is a holomorphic Poisson structure which vanishes at $0$ and the $3$--form is zero. 
\label{16:12:53}
\end{thm}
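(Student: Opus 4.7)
The plan is to reduce to the case where $\J$ has maximal type $n$ at $x$ (so $k=n$), and then show that such a structure is locally equivalent to one of the form $\J_\sigma$ for some holomorphic Poisson structure $\sigma$ vanishing at $x$, as in Example \ref{11:37:30}.

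For the reduction step, I would work with the underlying Poisson structure $\pi_\J$ defined by (\ref{12:20}). By hypothesis its corank at $x$ is $2k$, so the symplectic leaf through $x$ has dimension $2n-2k$. Weinstein's splitting theorem for Poisson manifolds then furnishes coordinates near $x$ in which a neighborhood is Poisson-diffeomorphic to $(\R^{2n-2k},\omega_{st}^{-1})\times (N,\pi_N)$ with $\pi_N$ vanishing at the basepoint. I would then upgrade this to a splitting of the full generalized complex structure by showing that, after applying a suitable $B$-field transformation, $\J$ decomposes as $\J_{\omega_{st}}\times \J_N$ where $\J_N$ is a generalized complex structure on $N$ of maximal type at the origin. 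Concretely, one picks a Lagrangian complement transverse to the leaf that is closed under $\J$, uses the canonical line bundle $K$ to trivialize the situation along the leaf, and invokes a Moser-type argument to absorb the cross terms in $B$ and a flow of the form (\ref{10:12:54}).

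Once reduced to the maximal-type case, I would first prove a \emph{formal} normal form: working in Taylor expansions at $x$, inductively combine diffeomorphisms with $B$-field transformations to match $\J$ against some model $\J_\sigma$ to arbitrarily high order. The obstructions at each step live in the Lie algebroid cohomology $H^\bullet(L)$, which at the basepoint $L$ is modeled on $T^{0,1}\oplus (T^\ast)^{1,0}$ of a standard complex structure; the relevant cohomology vanishes in the necessary degrees and every formal obstruction can be killed by an explicit choice of infinitesimal gauge and coordinate change. At this stage one obtains a formal holomorphic Poisson structure $\sigma$ together with a formal isomorphism $\J\cong \J_\sigma$.

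The genuine difficulty, and what I expect to be the main obstacle, is to promote the formal equivalence to a \emph{smooth} one. Direct summation of the formal series fails because the homotopy operators used to split the cohomological equations lose derivatives, so Picard iteration does not converge in any Banach space of smooth tensors. I would resolve this in the style of \cite{MR3128977} via a Nash--Moser implicit function theorem: construct smoothing operators on sections of the relevant Courant and spinor bundles, establish tame estimates for the linearized equation governing the $B$-field/diffeomorphism correction, and then run a Newton iteration whose quadratic convergence beats the fixed derivative loss. The technical heart is verifying the tame estimates in the presence of degeneracies of $\pi_\J$ away from the symplectic part, which is exactly the content of the cited normal-form theorem.
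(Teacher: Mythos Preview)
The paper does not prove this theorem: it is quoted from \cite{MR3128977} and used as a black box throughout, so there is no in-paper proof to compare your proposal against.

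That said, your outline is broadly faithful to how the result is actually established in the cited reference. Two remarks. First, the reduction step---splitting off the symplectic leaf so that one is left with a generalized complex structure of maximal type on a transverse slice---is itself a nontrivial theorem (due to Abouzaid--Boyarchenko) and is not an immediate consequence of Weinstein's Poisson splitting followed by a Moser argument; upgrading a Poisson product decomposition to a product of \emph{generalized complex} structures requires a separate argument, carried out via the spinor line rather than by ``picking a Lagrangian complement closed under $\J$''. Your sketch of that step is too vague to count as a proof. Second, your description of the maximal-type case---formal normal form obtained by killing successive cohomological obstructions, derivative loss in the homotopy operators, and a Nash--Moser iteration to pass from formal to smooth equivalence---matches the strategy of \cite{MR3128977} and correctly identifies where the real work lies.
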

\noindent
Finally we come to the notion of a generalized complex submanifold. For this the notion of holomorphic map as defined above is actually too restrictive. Let $\Phi=(\varphi,B)$ be a map and $L_2$ a Dirac structure on $(M_2,H_2)$. We define the \textsl{backward image} of $L_2$ along $\Phi$ by
\begin{align}
\mfB \Phi(L_2):=\{X+\varphi^\ast\xi-\iota_{{}_X}B |\varphi_\ast X+\xi\in L_2\}\subset \T M_1.
\end{align}
This is a Dirac structure on $(M_1,H_1)$, provided it is a smooth vector bundle. A sufficient condition for that is that $ker(d\varphi^\ast)\cap \varphi^\ast L$ is of constant rank. Similarly, the \textsl{forward image} of a Dirac structure $L_1$ on $(M_1,H_1)$ is given by 
\begin{align*}
\mfF\Phi(L_1):=\{ \varphi_\ast X+\xi | X+\varphi^\ast \xi -\iota_XB\in L_1		\}\subset \varphi^\ast \T M_2.
\end{align*}
This will be smooth if $ker(\varphi_\ast)\cap e^{-B}_\ast L$ has constant rank, and projects down to $M_2$ if it is constant along the fibers of $\varphi$. In case $\varphi$ is a diffeomorphism we have $\mfF\Phi(L)=\varphi_\ast (e^{-B}_\ast (L))$. More information on this, including proofs, can be found e.g. in \cite{MR3098084}. 
\begin{defn}
A \textsl{generalized complex submanifold} is a submanifold $i:Y\hookrightarrow (M,H,\J)$ such that $\mfB i(L)$ is generalized complex, i.e. is smooth and satisfies $\mfB i(L)\cap \overline{\mfB i(L)}=0$. 
\label{20:22}
\end{defn}
\begin{rem}
A sufficient condition for smoothness is that $N^\ast Y\cap \J N^\ast Y$ is of constant rank. Moreover, the second condition is equivalent to $\J N^\ast Y \cap (N^\ast Y)^\perp \subset N^\ast Y$. In complex or symplectic manifolds we recover the usual notion of complex, respectively symplectic submanifolds. Also, a point is always a generalized complex submanifold. Note that in the symplectic case the inclusion map is only generalized holomorphic if $Y$ is an open subset.  
\end{rem}


\section{Blowing up submanifolds}
\label{12:13}  

Before going to generalized complex geometry we discuss the notion of blowing up submanifolds in a more abstract setting. 
\begin{defn}
Let $M$ be a real manifold and let $C^\infty(M)$ be the space of complex valued smooth functions. Let $Y\subset M$ be a closed submanifold of real codimension $2l$ with $l\geq 1$. A \textsl{holomorphic ideal} for $Y$ is an ideal $I_Y\subset C^\infty(M)$ with the following properties:  
\begin{itemize}
\item[i)] $I_Y|_{{}_{M\backslash Y}}=C^\infty(M)|_{{}_{M\backslash Y}}$.
\item[ii)] Each $y\in Y$ has a neighborhood $U$ together with $z^1,\ldots ,z^l\in I_Y(U)$ such that \newline
$z:=(z^1,\ldots,z^l):U\rightarrow \C^l$ is a submersion with $Y\cap U=z^{-1}(0)$ and $I_Y|_{{}_U}=\langle z^1,\ldots ,z^l\rangle$.
\end{itemize}
\label{10:08}
\end{defn} 
\noindent Basically $I_Y$ is a choice of ideal which has $Y$ as its zero set but makes it look complex in transverse directions. A holomorphic ideal turns $NY$ into a complex vector bundle via $N^\ast Y_\C=N^{\ast 1,0} Y \oplus N^{\ast 0,1}Y$, where $N_y^{\ast 1,0} Y:=\langle d_yz   | \ z\in I_Y\rangle$. Given a complex structure on $NY$ there are many holomorphic ideals inducing it, and one way to obtain them is as follows. The zero section in $NY$ carries a natural holomorphic ideal generated by $\Gamma(N^{\ast 1,0}Y)$, viewed as fiberwise linear functions on $NY$. Using a tubular embedding of $NY$ into $M$ we can then glue this ideal on $NY$ to the trivial ideal on $M\backslash Y$. 

We will mainly be interested in holomorphic ideals for smooth submanifolds but in order to state the definition of the blow-up we need to consider also singular submanifolds of codimension $1$. 
\begin{defn}
A \textsl{divisior} on $M$ is an ideal $I_Y \subset C^\infty(M)$ which locally can be generated by a single function and whose zero set $Y$ is nowhere dense in $M$. 
\end{defn}
\noindent Equipped with these definitions we can define the notion of blowing up in the same way as is usually done in algebraic geometry.
\begin{defn}
Let $Y\subset M$ be a closed submanifold and $I_Y$ a holomorphic ideal for $Y$. The \textsl{blow-up} of $I_Y$ in $M$ is defined as a smooth manifold $\widetilde{M}$ together with a smooth \textsl{blow-down} map $p:\widetilde{M}\rightarrow M$ such that $I_{\widetilde{Y}}:=p^\ast I_Y$ is a divisor, and which is universal in the following sense: For any smooth map $f:X\rightarrow M$ such that $f^\ast I_Y$ is a divisor, there is a unique $\widetilde{f}:X\rightarrow \widetilde{M}$ such that the following diagram commutes: 
\begin{align*}
\xymatrix{
X \ar[rd]_f \ar@{-->}[r]^{\widetilde{f}}
&\widetilde{M} \ar[d]^p
\\
&M}
\end{align*}       
\end{defn}

\begin{thm}
The blow-up $(\widetilde{M},p)$ exists and is unique up to unique isomorphism. Moreover, $p:\widetilde{M}\backslash \widetilde{Y}\rightarrow M\backslash Y$ is a diffeomorphism, $I_{\widetilde{Y}}$ is smooth and $p:\widetilde{Y}\rightarrow Y$ is isomorphic to $\mbP(NY)\rightarrow Y$.  
\end{thm}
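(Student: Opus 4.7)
The plan is to handle uniqueness formally from the universal property and then construct $(\widetilde M, p)$ by an explicit local model that is glued along overlaps. Uniqueness is automatic: any two pairs $(\widetilde M, p)$ and $(\widetilde M', p')$ satisfying the universal property produce unique mutually inverse lifts, hence a canonical isomorphism. For existence, I would build $\widetilde M$ chart-by-chart using the standard incidence construction from algebraic geometry, and then glue via the local universal property so as not to have to check transition cocycles by hand.

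Concretely, let $y\in Y$ and pick $U$ and generators $z^1,\dots,z^l$ of $I_Y|_U$ as in Definition \ref{10:08}, so that $z=(z^1,\dots,z^l)\colon U\to\C^l$ is a submersion with $Y\cap U=z^{-1}(0)$. Set
\[
\widetilde U := \{(x,[w])\in U\times \C\mbP^{l-1}\mid z^i(x)\,w^j = z^j(x)\,w^i\ \text{for all } i,j\},
\]
with $p\colon\widetilde U\to U$ the first projection. Since $z$ is a submersion, $\widetilde U$ is the pullback $U\times_{\C^l}\mathrm{Bl}_0(\C^l)$ of the standard blow-up of $\C^l$ at the origin, hence a smooth manifold of the same dimension as $U$. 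In the affine chart $\{w^k=1\}$ one may take as coordinates $z^k$, the $w^i$ for $i\neq k$, and local coordinates on $Y$; in these coordinates $p^\ast z^i = z^k w^i$ for $i\neq k$ while $p^\ast z^k = z^k$, so $p^\ast I_Y|_{\widetilde U}=\langle z^k\rangle$ is principal and its zero set is the nowhere dense hypersurface $\{z^k=0\}$, making $p^\ast I_Y$ a divisor.

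Next I would verify that the local model has the universal property with respect to maps into $U$. Given $f\colon X\to U$ with $f^\ast I_Y=\langle h\rangle$, write $f^\ast z^i = g^i h$; the identity $h=\sum_j a_j f^\ast z^j=(\sum_j a_j g^j)h$, valid because $h$ generates the divisor, forces $\sum_j a_j g^j = 1$ wherever $h\neq 0$, so the $g^j$ cannot vanish simultaneously and
\[
\tilde f(x) := (f(x), [g^1(x):\cdots:g^l(x)])
\]
is a well-defined smooth lift, independent of the choice of $h$ (a different choice differs by multiplication by a unit). Any other lift must, by the incidence relations defining $\widetilde U$, agree with $\tilde f$ on the dense set $\{h\neq 0\}$, so $\tilde f$ is unique. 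Since $\widetilde U$ satisfies the local universal property, any other choice of generators yields a canonically isomorphic local blow-up, and these isomorphisms automatically patch to give a global $(\widetilde M,p)$ with the required universal property.

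For the last assertions: $p$ is a diffeomorphism off $\widetilde Y$ because the defining relations force $[w]=[z(x)]$ whenever $z(x)\neq 0$, so the projection is invertible there; and the exceptional set $\widetilde Y=p^{-1}(Y)$ is, in the local chart $U$, the product $(Y\cap U)\times\C\mbP^{l-1}$, where the fiber over $y\in Y$ is canonically $\mbP(\mathrm{span}_\C\{d_yz^1,\dots,d_yz^l\}^{\!\ast})=\mbP(N^{1,0}_yY)$. Under the complex structure that $I_Y$ induces on $NY$ this is exactly $\mbP(N_yY)$, and the identification is manifestly independent of the generators, hence globalizes to $\widetilde Y\cong \mbP(NY)$. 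I expect the main obstacle in carrying out the proof in detail to be precisely the gluing step: writing $\tilde z^i=h^i_j z^j$ for two generating sets, the matrix $h^i_j$ is only defined modulo $I_Y$, so a direct check that the explicit transition $(x,[w])\mapsto(x,[h(x)w])$ is smooth and independent of the representative is slightly delicate; invoking the local universal property cleanly sidesteps this computation.
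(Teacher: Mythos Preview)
Your proof is correct and follows essentially the same strategy as the paper: build the blow-up chartwise as the pullback of the standard blow-up $\mathrm{Bl}_0(\C^l)\to\C^l$ along the submersion $z$, verify the local universal property via the divisibility argument $\sum_j a_j g^j=1$ (extended from the dense set $\{h\neq 0\}$ by continuity), and then glue the local models using that universal property rather than by checking cocycles. The only cosmetic differences are that you package the lift globally as $x\mapsto(f(x),[g^1(x):\cdots:g^l(x)])$ while the paper picks an index $i_0$ with $a^{i_0}\neq 0$ and writes the lift in the corresponding affine chart, and that your hypothesis $f^\ast I_Y=\langle h\rangle$ should strictly be stated locally since a divisor is only locally principal---but this is exactly how the paper proceeds as well.
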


\begin{proof}
By definition we can cover $M$ by charts which are either disjoint from $Y$ or are of the form $\C^l\times \R^m$ with coordinates $(z^1,\ldots,z^l,x^1,\ldots ,x^m)$, where the $z^i$ are as in Definition \ref{10:08} $ii)$ and $x^i$ are coordinates on $Y$. If we can construct the blow-up on each individual chart then the universal property implies that all the local constructions can be glued into the desired manifold $\widetilde{M}$. On a chart not intersecting $Y$ we do nothing as $I_Y$ is already (trivially) a divisor there. On a chart $U=\C^l\times \R^m$ as above with $Y\cap U=\{0\}\times \R^m$  we define $\widetilde{U}:=\widetilde{\C^l}\times \R^m$ and $p=(\pi,Id):\widetilde{U}\rightarrow U$ where $\pi:\widetilde{\C^l}\rightarrow \C^l$ is the blow-up of the origin. Recall that 
\begin{align*}
\widetilde{\C^l}=\{(z,[x])|z\in [x]\}\subset \C^l\times \mbP^{l-1}
\end{align*}
has a cover by $l$ charts on which $\pi$ has the form
\begin{align}
(v^1,\ldots, v^{i-1},z^i,v^{i+1},\ldots,v^l)\mapsto (z^iv^1,\ldots, z^iv^{i-1},z^i, z^iv^{i+1},\ldots ,z^iv^l)
\label{11:10}
\end{align}    
for $i\leq l$. Now suppose that $f:X\rightarrow U$ is a map such that $f^\ast (I_Y|_{{}_U})$ is a divisor with nowhere dense zero set $D$. The desired lift $\widetilde{f}:X\rightarrow U$ is already uniquely defined on $X\backslash D$ because $\pi$ is an isomorphism over $\C^l\backslash \{0\}$, so we only have to show that $\widetilde{f}$ extends smoothly over $D$. To that end write $f=(f^1,\ldots, f^l, f'^1,\ldots, f'^m)$, so that $f^\ast (I_Y|_{{}_U})=\langle f^1,\ldots, f^l\rangle$. By definition of being a divisor there exists, on a neighborhood $V$ of any $x_0\in D$, a function $g$ with $\langle g\rangle=\langle f^1,\ldots, f^l\rangle$. Therefore there exist $a^i,b_i\in C^\infty(V)$ with $f^i=a^ig$ and $g=\sum_i b_if^i$ and so, since $g\neq 0$ on a dense set, we obtain $\sum_ia^ib_i=1$. In particular there is an index $i_0$ such that, after possibly shrinking $V$, $a^{i_0}$ is nowhere zero. The map $\widetilde{f}:V\backslash D\rightarrow \widetilde{U}$ maps into the chart (\ref{11:10}) for $i=i_0$, where it is necessarily of the form 
\begin{align*}
\widetilde{f}:x\mapsto \big(\frac{f^1(x)}{f^{i_0}(x)},\ldots, f^{i_0}(x), \ldots, \frac{f^l(x)}{f^{i_0}(x)},f'^1(x),\ldots,f'^m(x)\big).
\end{align*}
Since $f^i/f^{i_0}=a^i/a^{i_0}$ we see that $\widetilde{f}$ indeed extends smoothly over the whole of $V$, and therefore over the whole of $D$. So, from the above discussion the blow-up $p:\widetilde{M}\rightarrow M$ indeed exists and is unique. Its further mentioned properties are easily verified from the construction.
\end{proof}
\begin{rem}
It follows from the universal property that the blow-up construction is functorial, i.e. for any map $f:(M_1,I_{Y_1})\rightarrow (M_2,I_{Y_2})$ with $f^\ast I_{Y_2}=I_{Y_1}$, there is a unique map $\widetilde{f}:\widetilde{M_1}\rightarrow \widetilde{M_2}$ making the obvious diagram commute. Note that $f^\ast I_{Y_2}=I_{Y_1}$ implies that the induced map $df:NY_1\rightarrow NY_2$ is complex linear and injective. One case where this occurs is when $f:M_1\rightarrow (M_2,I_{Y_2})$ is transverse to $Y_2$. Then $f^\ast I_{Y_2}$ is a holomorphic ideal for $Y_1:=f^{-1}Y_2$.  
\end{rem}


\subsection{Generalized Poisson submanifolds}
\label{complex case}

In this section we will look at generalized complex submanifolds which are complex in transverse directions. The precise definition is as follows. 

\begin{defn}
Let $\J$ be a generalized complex structure on $M$. A \textsl{generalized Poisson submanifold} is a submanifold $Y\subset M$ such that $\J N^\ast Y=N^\ast Y$. 
\end{defn} 
\noindent
This condition is equivalent to $\J N^\ast Y\cap (N^\ast Y)^\perp =N^\ast Y$, hence generalized Poisson submanifolds are automatically generalized complex\footnote{Note that $N^\ast Y_\C\cap i^\ast L=N^\ast Y_\C$ so $\mfB i(L)$  is automatically smooth, where $i:Y\hookrightarrow M$ denotes the inclusion.} in the sense of Definition \ref{20:22}. Since $\J$ is orthogonal it also preserves $(N^\ast Y)^\perp$ and this gives an explicit description of the generalized complex structure induced on $Y$ via $\T Y\cong (N^\ast Y)^\perp/N^\ast Y$. In this description it is clear that the inclusion map is generalized holomorphic and so $Y$ is a Poisson submanifold for $\pi_{\J}$, justifying the terminology. The key fact in the blow-up theory of generalized Poisson submanifolds is the following.
\begin{prop}
\label{11:14:11}
Let $Y\subset (M,\J)$ be a closed generalized Poisson submanifold. There is a canonical holomorphic ideal $I_Y$ whose associated complex structure on $N^\ast Y$ is given by $\J$. 
\end{prop}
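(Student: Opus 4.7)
My approach is to build $I_Y$ locally using the generalized Darboux theorem (Theorem \ref{16:12:53}) and then show the local ideals are canonical and patch. The key preliminary observation is that $\J N^\ast Y = N^\ast Y$, combined with the isotropy of $N^\ast Y$, forces
\[
N^\ast Y \subset T^\ast M \cap \J T^\ast M = \nu_\J
\]
along $Y$. Hence the type $k$ of $\J$ satisfies $k \geq l$ at every point of $Y$, and $TY$ contains the entire tangent distribution to the symplectic leaves of $\pi_\J$.

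Around a point $y \in Y$, I apply Theorem \ref{16:12:53} to obtain a generalized holomorphic isomorphism
\[
\Phi \colon (U, \J) \xrightarrow{\sim} (V, \omega_{st}) \times (W, \J_\sigma)
\]
with $V \subset \R^{2n-2k}$, $W \subset \C^k$, $\Phi(y) = 0$, and $\sigma$ a holomorphic Poisson structure vanishing at $0$. Since $\Phi$ is a Poisson map and $TY$ contains the symplectic-leaf directions, $\Phi(Y \cap U)$ is saturated by $V$-translates, so by Frobenius (and after shrinking $U$) one has $\Phi(Y \cap U) = V \times Y'$ for some $Y' \subset W$ of real codimension $2l$. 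Transporting the condition $\J N^\ast Y = N^\ast Y$ through $\Phi$ gives $\J_\sigma N^\ast Y' = N^\ast Y'$; since $\J_\sigma$ acts on $T^\ast W$ by the dual complex structure $I^\ast$ (compare (\ref{13:30})), this forces $Y'$ to be a complex submanifold of $\C^k$ of complex codimension $l$, and simultaneously a holomorphic Poisson submanifold of $(W, \sigma)$. Picking holomorphic coordinates $z^1, \ldots, z^l$ on $W$ cutting out $Y'$, I would then set
\[
I_Y|_U := \langle \Phi^\ast z^1, \ldots, \Phi^\ast z^l \rangle \subset C^\infty(U, \C).
\]
The two properties of Definition \ref{10:08} are immediate, and by construction $d(\Phi^\ast z^i)|_Y$ spans $N^{\ast 1,0} Y$ with respect to the complex structure on $N^\ast Y$ induced by $\J$.

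The remaining step, which I expect to be the main obstacle, is showing that $I_Y|_U$ is independent of the choices made, so that together with the trivial ideal on $M \setminus Y$ the local ideals patch into a canonical global $I_Y$. Given two normal forms $\Phi, \Phi'$ on overlapping neighborhoods of $y$, the transition $\Psi = \Phi' \circ \Phi^{-1}$ is a generalized holomorphic automorphism of the standard model fixing $0$ and sending $\Phi(Y \cap U)$ to $\Phi'(Y \cap U)$. The needed conclusion is that $(\Phi')^\ast z^i = \sum_j a^i_j\, \Phi^\ast z^j$ for some smooth invertible matrix $a^i_j$. I would obtain this from: (a) $\Psi_\ast L = L$, together with the fact that a generalized map preserves $T^\ast M$, giving $\Psi_\ast \nu_\J^{1,0} = \nu_\J^{1,0}$; (b) $\Psi$ is Poisson, hence preserves the symplectic foliation of $\pi_\J$; and (c) $\Psi$ preserves $V \times Y'$. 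The subtlety is that $\Psi$ need not respect the product decomposition $V \times W$ literally — it may shear between factors and absorb a $B$-field contribution — and the task is to show that all such ambiguity lies inside the ideal generated by the transverse $(1,0)$-functions, so the ideal itself is rigid.
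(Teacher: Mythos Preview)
Your setup is the same as the paper's: apply Theorem \ref{16:12:53}, observe $Y\cap U = W\times Z$ with $Z$ a holomorphic Poisson submanifold, and take the obvious ideal $\langle z^1,\ldots,z^l\rangle$. The difficulty, as you correctly identify, is the patching, and here your proposal has a genuine gap.

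Your ingredients (a), (b), (c) are all true but only give \emph{first-order} information along $Y$. From $\Psi_\ast \nu_\J^{1,0}=\nu_\J^{1,0}$ you get that $dw^i|_Y$ lies in the span of $dz^1|_Y,\ldots,dz^l|_Y$; this is exactly the statement that the induced complex structures on $N^\ast Y$ agree. But membership in the ideal $\langle z^1,\ldots,z^l\rangle$ is an \emph{infinite-order} condition along $Y$: one needs $\varphi^\ast w^i$ to have no $\bar z^{i_1}\cdots\bar z^{i_r}$ terms in its Taylor expansion, for all $r$. A priori, $\varphi(Y)=Y$ only gives $\varphi^\ast w^i\in\langle z^1,\ldots,z^l,\bar z^1,\ldots,\bar z^l\rangle$, and your argument does not exclude terms like $\bar z^{i_1}\bar z^{i_2}$, which already survive (a). The underlying diffeomorphism $\varphi$ is \emph{not} holomorphic in the $\C^k$-directions in general (the $B$-field absorbs the obstruction at the level of $\J$, not at the level of coordinates), so there is no soft reason for the higher $\bar z$-derivatives to vanish.

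The paper's proof handles exactly this. It writes the isomorphism condition as an equality of spinors,
\[
e^{i\omega_1}\wedge e^{\sigma_1}(dz^1\cdots dz^k)=e^{f+B+i\omega_2}\wedge e^{\sigma_2}(dw^1\cdots dw^k),
\]
applies $dw^i\wedge \mcL_{\partial_{\bar z^{i_1}}}\cdots\mcL_{\partial_{\bar z^{i_m}}}$ to both sides, and shows by induction on $m$ that $\partial^{m}w^i/\partial\bar z^{i_1}\cdots\partial\bar z^{i_m}|_Y=0$. This step uses in an essential way that $\sigma_1,\sigma_2$ are holomorphic and that $Z$ is Poisson (so $\sigma_2(dw^j)$ has controlled $\bar z$-derivatives), and then invokes Malgrange's theorem to pass from vanishing of all normal $\bar z$-jets to actual membership in $\langle z^1,\ldots,z^l\rangle$. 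This inductive spinor computation is the heart of the argument and is absent from your proposal.
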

\begin{proof}
Consider a generalized complex chart $U=(\R^{2n-2k},\omega_{st}) \times (\C^k,\sigma)$ around a point in $Y$ as provided by Theorem \ref{16:12:53}. Since $Y$ is a union of symplectic leaves we have $Y\cap U=W\times Z$ where $W\subset \R^{2n-2k}$ is open and $Z\subset \C^k$ is a complex submanifold which is Poisson for $\sigma$. By choosing appropriate holomorphic coordinates $z^i$ on $\C^k$ we may assume that $Z=\{z^1,\ldots, z^l=0\}$ and a natural choice of holomorphic ideal for $Y$ in $U$ is then given by $\langle z^1,\ldots, z^l\rangle$. To patch these local ideals into a global one we need to show that on the overlap of two charts the corresponding ideals match. So suppose $(\R^{2n-2k},\omega_i)\times (\C^k,\sigma_i)$, $i=1,2$, are two local models\footnote{Strictly speaking we should look at open neighborhoods of $0$ but for sake of notation we suppress this. Also note that we can assume that the ``$k$" in both charts is the same, as the type can only jump in even steps and $(\R^{4s},\omega_{st})$ is isomorphic to $(\C^{2s},\sigma_0)$ for $\sigma_0$ an invertible holomorphic Poisson structure.} and suppose that $(\varphi,B)$ is a generalized complex isomorphism between them which maps $Y$ to itself. Let $(x,z)$ and $(y,w)$ be coordinates on the two charts, where $x,y$ and $z,w$ denote the symplectic, respectively complex directions, and such that $I_Y$ is given by $\langle z^1,\ldots,z^l\rangle$, respectively $\langle w^1,\ldots,w^l\rangle$. By symmetry it suffices to  show that $\varphi^\ast w^i\in\langle z^1,\ldots ,z^l\rangle$ for all $i\leq l$. As is shown in \cite[Ch.VI]{MR0212575}, this condition may be verified on the level of Taylor series and since  $\varphi^\ast w^i\in \langle z^1,\ldots ,z^l,\bar{z}^1,\ldots ,\bar{z}^l \rangle$ because $\varphi(Y)=Y$, we only need to verify that 
\begin{align}
\left. \frac{\partial^r w^i}{\partial \bar{z}^{i_1}\ldots \partial \bar{z}^{i_r}}\right |_{Y}=0, \ \ \ \forall r\geq 0, \ \forall i,i_1,\ldots,i_r\in \{1,\dots ,l\}.
\label{eqn4}
\end{align}
Here we are abbreviating $w^i:=\varphi^\ast w^i$. The case $r=0$ reads $w^i|_{{}_Y}=0$, which is satisfied since $\varphi(Y)=Y$. To verify (\ref{eqn4}) we first write out what it means for $(\varphi,B)$ to be an isomorphism:
\begin{align}
e^{i\omega_1}\wedge e^{\sigma_1}(dz^1\ldots dz^k)=e^{f+B+i\omega_2}\wedge e^{\sigma_2}(dw^1\ldots  dw^k).
\label{eqn1}
\end{align}
The factor $e^f$ is there because we are taking representatives of the spinor line. At $Y$, using that $Y$ is Poisson, (\ref{eqn1}) becomes 
\begin{align*}
e^{i\omega_1}\wedge dz^1\ldots dz^l\wedge e^{\sigma_1}(dz^{l+1} \ldots  dz^k)=&e^{f+B+i\omega_2}\wedge dw^1 \ldots  dw^l\wedge e^{\sigma_2}(dw^{l+1} \ldots dw^k).
\end{align*}
\noindent
Now apply $dw^i\wedge \iota_{\partial_{\bar{z}^{i_1}}}$, with $i,i_1\leq l$, to both sides. The left hand side vanishes while the only survivor on the right is given by
\begin{align*}
 \frac{\partial w^i}{\partial \bar{z}^{i_1}}e^{f+B+i\omega_2}\wedge dw^1\ldots dw^l\wedge e^{\sigma_2}(dw^{l+1} \ldots dw^k),
\end{align*}
so (\ref{eqn4}) holds for $r\leq 1$. This implies in particular that the forms $dz^1\ldots dz^l$ and $dw^1\ldots dw^l$ are proportional along $Y$, where again we think of $w^i$ as a function of $(x,z)$. 

Suppose inductively that for some $m\geq 1$ Equation (\ref{eqn4}) is satisfied for all $r\leq m$. Apply $dw^i \wedge\mcL_{\partial_{\bar{z}^{i_1}}}\ldots\mcL_{\partial_{\bar{z}^{i_{m}}}}$, for any $i,i_1,\ldots, i_m\leq l$, to both sides of (\ref{eqn1}) and evaluate the resulting expression at $Y$. The left hand side will vanish again because $\omega_1$ is independent of $z$ and $\sigma_1$ is holomorphic. Using multi-index notation, the Leibniz rule gives
\begin{align}
0=dw^i\wedge \sum_{\substack{I\sqcup J\sqcup K\sqcup L= \\ \{i_1,\ldots, i_m\} }} \mcL_{\partial_{\bar{z}^I}}(e^{f+B+i\omega_2}) \mcL_{\partial_{\bar{z}^J}}(e^{\sigma_2}) \mcL_{\partial_{\bar{z}^K}}(dw^1\ldots dw^l)\mcL_{\partial_{\bar{z}^L}}(dw^{l+1}\ldots dw^k).
\label{eqn3}
\end{align}
\begin{itemize}
\item[] \textbf{Claim:} We have $\mcL_{\partial_{\bar{z}^J}}\sigma_2(dw^j)|_{{}_Y}=0$ for all $J\subset \{i_1,\ldots, i_m\}$ and $j\leq l$.
\end{itemize}
Let us accept this claim for the moment and continue with the proof. 
We compute
\begin{align}
\mcL_{\partial_{\bar{z}^K}}dw^j 
=& \sum_{1 \leq a \leq k}\frac{\partial^{|K|+1} w^j}{\partial z^a\partial\bar{z}^K}dz^a+\sum_{1 \leq a \leq k}\frac{\partial^{|K|+1} w^j}{\partial \bar{z}^a\partial\bar{z}^K}d\bar{z}^a+\sum_{1\leq b \leq 2n-2k}\frac{\partial^{|K|+1} w^j}{\partial x^b\partial\bar{z}^K}dx^b.
\label{eqn2}
\end{align}
If $j\leq l$, the function $\partial^{|K|}w^j/\partial \bar{z}^{|K|}$ vanishes along $Y$ by the induction hypothesis. Hence, at $Y$ the first and second terms above with $a> l$ together with the entire third term vanish, because we differentiate in directions tangent to $Y$. If in addition $|K| < m$, the second term vanishes all together by the induction hypothesis. It follows that for $K\subsetneq \{i_1,\ldots, i_m\}$, $\mcL_{\partial_{\bar{z}^K}}(dw^1\ldots dw^l)|_{{}_Y}$ is proportional to $(dw^1\ldots dw^l)|_{{}_Y}$. Using the Claim, these terms all disappear from (\ref{eqn3}) because we wedge everything with $dw^i$. It is then readily verified that (\ref{eqn3}) reduces to 
\begin{align*}
0=e^{f+B+i\omega_1}e^{\sigma_2}\sum_{1\leq i_{m+1}\leq l} \frac{\partial^{m+1} w^i}{\partial \bar{z}^{i_1}\ldots \partial \bar{z}^{i_{m+1}}}		d\bar{z}^{i_{m+1}} dw^1\ldots dw^k
\end{align*} 
at $Y$. So (\ref{eqn4}) holds for $r=m+1$ as well and therefore for all $r$ by induction.
\end{proof}

\noindent \textsl{Proof of Claim.} 
If we write $\sigma_2=\sigma_2^{ab}\partial_{w^a}\partial_{w^b}$, the Poisson condition implies that $\sigma_2^{ab}$ vanishes at $Y$ for $a\leq l$ or $b\leq l$. A repeated Lie derivative on $\sigma_2$ will be a sum of terms of the form
\begin{align}
 \frac{\partial^r \sigma_2^{ab}}{\partial \bar{z}^{i_1}\ldots \partial \bar{z}^{i_r}}(\mcL_{\partial_{\bar{z}^{j_1}}}\ldots \mcL_{\partial_{\bar{z}^{j_s}}}\partial_{w^a})(\mcL_{\partial_{\bar{z}^{k_1}}}\ldots \mcL_{\partial_{\bar{z}^{k_t}}}\partial_{w^b}).
\label{13:07:23}
\end{align}
Using the chain rule and the fact that $\sigma_2$ is holomorphic we can rewrite the first term in terms of $w$--derivatives. By the induction hypothesis there are no derivatives in the $w^i$--directions for $i\leq l$, because these come together with a term of the form $\partial w^i/\partial \bar{z}^{i_j}$ or a further derivative thereof. Moreover, if either $a\leq l$ or $b\leq l$ there are also no $w^i$--derivatives for $i>l$ because these are tangent to $Y$ along which $\sigma^{ab}$ is constantly equal to zero. Hence (\ref{13:07:23}) will only be nonzero at $Y$ for $a,b>l$ and so to prove the Claim it suffices to show that $(\mcL_{\partial_{\bar{z}^{j_1}}}\ldots \mcL_{\partial_{\bar{z}^{j_s}}}\partial_{w^a})(dw^j)|_{{}_Y}=0$ for $a>l, j\leq l$. Abbreviating $J=\{j_1,\ldots, j_s\}$ we have  
\begin{align*}
0=\mcL_{\bar{z}^J}(dw^j(\partial_{w^a}))=\sum_{J_1\sqcup J_2=J} (\mcL_{\bar{z}^{J_1}}dw^j) (\mcL_{\bar{z}^{J_2}}\partial_{w^a}).
\end{align*}
From Equation (\ref{eqn2}) and the comments below it we see that $\mcL_{\bar{z}^{J_1}}dw^j$ is a linear combination of $dw^{j'}$ with $j'\leq l$. The result then follows by induction over $s$. \qed
\newline
\newline
\noindent
Having a canonical holomorphic ideal for $Y$ we obtain a canonical blow-up $\widetilde{M}$. We now investigate whether $\widetilde{M}$ carries a generalized complex structure for which the blow-down map $p$ is holomorphic. Clearly this structure exists and is unique on $\widetilde{M}\backslash \widetilde{Y}$ and we only need to verify whether it extends over $\widetilde{Y}$. From the definition of the ideal $I_Y$ and the blow-up construction, $p$ is locally given by 
\begin{align*}
\R^{2(n-k)} \times Bl_{{}_Z} \C^k\rightarrow \R^{2(n-k)} \times \C^k
\end{align*}  
where $Bl_{{}_Z}\C^k$ is the complex blow-up of $Z\subset \C^k$. The target is equipped with the generalized complex structure determined by the standard symplectic form on $\R^{2(n-k)}$ and a holomorphic Poisson structure $\sigma$ on $\C^k$. Clearly this structure lifts if and only if $\sigma$ lifts. So we are led to the following question: When does a holomorphic Poisson structure lift to a blow-up? This was addressed by Polishchuk in \cite{MR1465521} and for completeness we review the results here in a more differential geometric language. Recall that $Z\subset (X,\sigma)$ is a holomorphic Poisson submanifold if and only if its holomorphic ideal sheaf $I_Z$ of functions vanishing on $Z$ is a Poisson ideal. In that case $N^{\ast 1,0} Z$ inherits a fiberwise Lie algebra structure, given by the Poisson bracket under the natural isomorphism $N^{\ast 1,0} Z\cong I_Z/I_Z^2$. To state the blow-up conditions on $Z$ we need the following terminology.
\begin{defn}
A Lie algebra $\mfg$ is \textsl{degenerate} if the map $\Lambda^3\mfg\rightarrow Sym^2(\mfg)$ given by
\begin{align*}
x\wedge y \wedge z\mapsto [x,y]z+[y,z]x+[z,x]y
\end{align*}
vanishes. 
\end{defn}
\begin{rem} As stated this definition depends on the field over which $\mfg$ is defined. If $\mfg$ is real then $\mfg$ is degenerate over $\R$ if and only $\mfg_\C$ is so over $\C$. However, if $\mfg$ is complex we can also consider it over $\R$ by forgetting the complex structure and then degeneracy over $\R$ implies degeneracy over $\C$ but not vice versa. It is shown in \cite{MR1465521} that degeneracy is equivalent to being either Abelian or isomorphic to the algebra generated by $e_1,\ldots, e_{n-1},f$, with relations $[e_i,e_j]=0$ and $[f,e_i]=e_i$. Note that $2$--dimensional Lie algebras are always degenerate. 
\label{20:16}
\end{rem}
\
\newline
\noindent
If $Z$ is Poisson we call $N^{\ast 1,0} Z$ degenerate if its fiberwise Lie algebra structure is degenerate over $\C$. This is equivalent to the condition
\begin{align}
\label{14:07:39}
\{f,g\}h+\{g,h\}f+\{h,f\}g\in I_Z^3 \ \ \ \ \forall f,g,h\in I_Z.
\end{align}
Now let $p: \widetilde{X}\rightarrow X$ denote the complex blow-up along a complex submanifold $Z$, and let $\widetilde{Z}$ be the exceptional divisor. We say that $\sigma$ can be lifted if there exists a holomorphic Poisson structure $\widetilde{\sigma}$ on $\widetilde{X}$ for which $p$ is a Poisson map. Note that a lift is necessarily unique, because $p$ is an isomorphism almost everywhere. 
\begin{prop} (\cite{MR1465521})
There exists a lift $\widetilde{\sigma}$ on $\widetilde{X}$ if and only if $Z$ is a Poisson submanifold and $N^{\ast 1,0} Z$ is degenerate. The exceptional divisor $\widetilde{Z}$ is a Poisson submanifold if and only if $N^{\ast 1,0}Z$ is Abelian.
\label{14:35}
\end{prop}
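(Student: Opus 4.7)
The plan is to reduce the problem to a local analysis of smoothness of Poisson brackets across the exceptional divisor. Since $p$ is a biholomorphism away from $\widetilde{Z}$, the lift $\widetilde\sigma$ is uniquely determined there and holomorphicity together with the Jacobi identity propagate to any smooth extension by continuity; the whole question is therefore one of smoothness.

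Fix a point of $Z$ and choose holomorphic coordinates $(z^1,\ldots,z^n)$ with $Z=\{z^1=\cdots=z^l=0\}$. In the $i$-th standard chart of $\widetilde X$ (with $i\leq l$) the coordinates are $(v^1,\ldots,z^i,\ldots,v^l,z^{l+1},\ldots,z^n)$ with $z^j=z^iv^j$ for $j\leq l$, $j\neq i$, and the exceptional divisor is cut out by $\{z^i=0\}$. Brackets involving only the transverse directions extend trivially, so the two genuinely new brackets are computed by Leibniz as
\[
\{z^i,v^j\}=\frac{1}{z^i}\{z^i,z^j\},\qquad
\{v^j,v^k\}=\frac{1}{(z^i)^3}\bigl[z^i\{z^j,z^k\}+z^j\{z^k,z^i\}+z^k\{z^i,z^j\}\bigr].
\]
The first extends smoothly iff $\{z^i,z^j\}$ vanishes on $Z$; running over all pairs (including brackets with the transverse coordinates $z^{l+1},\ldots,z^n$) one recovers the full Poisson submanifold condition $\{I_Z,C^\infty\}\subset I_Z$, which gives the necessity of that hypothesis.

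Assume now that $Z$ is Poisson and write $\{z^a,z^b\}=\sum_{c\leq l} z^c C^{ab}_c$ with holomorphic $C^{ab}_c$; by construction $C^{ab}_c|_Z$ are the structure constants of the Lie algebra on $N^{*1,0}Z\cong I_Z/I_Z^2$. Substituting $z^c=z^iv^c$, the bracketed numerator in the second formula becomes divisible by $(z^i)^2$; the remaining divisibility by $z^i$, i.e.\ the vanishing at $z^i=0$ of the quotient, translates by a direct expansion into the cyclic identity \eqref{14:07:39} on all triples in $I_Z$, which is exactly degeneracy of $N^{*1,0}Z$ over $\C$. Conversely, when this condition holds all brackets are smooth in every chart and the local lifts glue by uniqueness of $\widetilde\sigma$ on the dense open set $\widetilde X\setminus\widetilde Z$.

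For the second statement, $\widetilde Z$ is a Poisson submanifold iff its local defining ideal $\langle z^i\rangle$ is Poisson, i.e.\ $\{z^i,v^j\}$ vanishes at $z^i=0$ for each $j$. Expanding $\{z^i,v^j\}=C^{ij}_i+\sum_{c\leq l,\,c\neq i}v^c C^{ij}_c$ and demanding this vanish on $Z$ for arbitrary $v^c$ forces $C^{ij}_c|_Z=0$ for all $c$, which is precisely the abelian condition. I expect the main obstacle to be matching the explicit polynomial identity produced by the $\{v^j,v^k\}$ computation with the coordinate-free cyclic condition \eqref{14:07:39}, and checking chart independence—both require careful bookkeeping with the three-fold cyclic symmetry in the indices.
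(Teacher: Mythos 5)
Your proposal follows essentially the same route as the paper: the same local charts for the blow-up, the same two problematic brackets $\{z^i,v^j\}$ and $\{v^j,v^k\}$, with smoothness of the first across the exceptional divisor giving the Poisson submanifold condition, smoothness of the second giving degeneracy in the sense of (\ref{14:07:39}), and divisibility of $\{z^i,v^j\}$ by $z^i$ giving the Abelian condition for $\widetilde{Z}$. The only (harmless) discrepancy is that your displayed formula for $\{z^i,v^j\}$ omits the second Leibniz term $-\tfrac{z^j}{(z^i)^2}\{z^m,z^i\}$ that appears when the first entry is a transverse coordinate $z^m$ with $m>l$, though you do flag that these brackets must also be checked.
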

\begin{proof}
Let $z^1,\ldots,z^k$ be local coordinates on $X$ with $Z=\{z^1,\ldots,z^l=0\}$ for some $l\leq k$. This is covered by $l$ charts on $\widetilde{X}$ on which the projection has the form (c.f. (\ref{11:10}))
\begin{align}
p:(v^1,\ldots,z^a,\ldots,v^l,z^{l+1},\ldots,z^k)\mapsto (z^av^1,\ldots,z^a,\ldots ,z^av^l,z^{l+1},\ldots, z^k)
\label{16:52:27}
\end{align}
for $a\leq l$. Then $p$ is an isomorphism on the open dense set $\{z^a\neq 0\}$, where we have $v^j=z^j/z^a$. We have to verify when the brackets extend smoothly over the exceptional divisor $\{z^a=0\}$. There are two types of brackets that cause trouble. Firstly,  
\begin{align}
\{z^i,v^j\}=\{z^i,\frac{z^j}{z^a}\}=\frac{1}{z^a}\{z^i,z^j\}-\frac{z^j}{(z^a)^2}\{z^i,z^a\}, 
\label{15:34}
\end{align}
for $i=a$ or $i>l$, and $j \leq l$ with $j\neq a$. Secondly,  
\begin{align}
\{v^i,v^j\}=\{\frac{z^i}{z^a},\frac{z^j}{z^a}\}=\frac{1}{(z^a)^3} \big( z^a\{z^i,z^j\}+z^i\{z^j,z^a\}+z^j\{z^a,z^i\}\big),
\label{16:08}
\end{align}
for $1\leq  i,j\leq l$, $i\neq a \neq j$. Now (\ref{15:34}) extends smoothly over $z^a=0$ for all $a$ if and only if $I_Z$ is Poisson, while (\ref{16:08}) extends over $z^a=0$ for all $a$ if and only if $I_Z$ is degenerate in the sense of (\ref{14:07:39}). Finally, $I_{\widetilde{Z}}$ is generated by $z^a$ and this is a Poisson ideal if and only if the right hand side of (\ref{15:34}) for $i=a$ is divisible by $z^a$, which is equivalent to $\{I_Z,I_Z\}\subset I_Z^2$. 
\end{proof}
\noindent If $Y\subset (M,\J)$ is a generalized Poisson submanifold then $Y$ is in particular a Poisson submanifold for $\pi_\J$, and so $N^\ast Y$ inherits a fiberwise Lie algebra structure in the same manner as discussed above in the holomorphic Poisson context. As e.g. shown in the proof below, this Lie bracket is complex linear with respect to the complex structure on $N^\ast Y$ induced by $\J$. We call $N^\ast Y$ degenerate if the Lie algebra structure is degenerate over $\C$.
\begin{thm}
Let $Y\subset (M,\J)$ be a generalized Poisson submanifold and let $p:\widetilde{M}\rightarrow M$ denote the blow-up with respect to the canonical holomorphic ideal $I_Y$. Then $\widetilde{M}$ has a generalized complex structure for which $p$ is holomorphic if and only if $N^\ast Y$ is degenerate. 
\end{thm}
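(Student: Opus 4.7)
The plan is to reduce the problem to the purely holomorphic Poisson situation already handled by Polishchuk (Proposition \ref{14:35}) by exploiting the normal form theorem and the fact that the blow-down map $p$ is, almost everywhere, an isomorphism of generalized complex manifolds. First, I would work in a local model $U\cong (\R^{2n-2k},\omega_{st})\times (\C^k,\sigma)$ around a point of $Y$ provided by Theorem \ref{16:12:53}. Since $Y$ is a union of symplectic leaves, as in the proof of Proposition \ref{11:14:11}, we may arrange that $Y\cap U=W\times Z$ with $W\subset\R^{2n-2k}$ open and $Z\subset\C^k$ a holomorphic Poisson submanifold for $\sigma$. By the construction in Proposition \ref{11:14:11} the canonical ideal $I_Y$ is then generated by holomorphic defining functions for $Z$, so the blow-down map restricts to
\[
p|_{\widetilde U}\colon \R^{2n-2k}\times Bl_Z\C^k\longrightarrow \R^{2n-2k}\times \C^k,
\]
which is the identity on the symplectic factor and Polishchuk's complex blow-up on the holomorphic factor.

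The next step is to observe that any generalized complex structure $\widetilde\J$ on $\widetilde M$ making $p$ holomorphic is uniquely determined: it equals $(p|_{\widetilde M\setminus \widetilde Y})^{-1}_*\J$ on the complement of the exceptional divisor, and the question of its existence is purely a question of smooth extension across $\widetilde Y$. Restricted to the local chart, such an extension exists iff the product structure $\omega_{st}\times\sigma$ pulls back to an honest generalized complex structure on $\R^{2n-2k}\times Bl_Z\C^k$. Since the symplectic factor is unaffected by $p$, this reduces to asking whether the holomorphic Poisson structure $\sigma$ on $\C^k$ admits a lift $\widetilde\sigma$ to $Bl_Z\C^k$. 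Applying Proposition \ref{14:35}, this happens iff $Z$ is a holomorphic Poisson submanifold (which it is, automatically) and $N^{\ast 1,0}Z$ is degenerate in the sense of (\ref{14:07:39}).

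The remaining step is to identify the local condition ``$N^{\ast 1,0}Z$ degenerate'' with the global condition ``$N^\ast Y$ degenerate''. In the product chart $\R^{2n-2k}\times\C^k$ the conormal bundle $N^\ast Y$ at a point of $Y$ is precisely $N^\ast Z$, the complex structure induced by $\J$ agrees with the complex structure of $Z\subset\C^k$, and the Poisson bracket on $I_Y$ induced by $\pi_\J$ coincides (on functions pulled back from the holomorphic factor) with the holomorphic Poisson bracket determined by $\sigma$, because the symplectic factor contributes trivially to brackets of functions in $I_Y$. Thus the complex Lie algebra $(N^\ast Y,[\cdot,\cdot])$ at a point of $Y\cap U$ is canonically isomorphic to $(N^{\ast 1,0}Z,[\cdot,\cdot]_\sigma)$, and degeneracy over $\C$ of one is equivalent to degeneracy over $\C$ of the other. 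Since degeneracy is a pointwise condition and the uniqueness of the lift forces local lifts to patch consistently, a global lift exists iff every local model admits one, which is iff $N^\ast Y$ is degenerate at every point.

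The main obstacle in carrying this out carefully is the verification, in the last paragraph, that the induced bracket on $N^\ast Y$ in the generalized complex setting really coincides with the holomorphic Poisson bracket on $N^{\ast 1,0}Z$ in the chart; one has to use the explicit form (\ref{13:30}) of $\J_\sigma$, the formula (\ref{12:20}) for $\pi_\J$, and the fact that for $f,g\in I_Y$ the symplectic factor contributes nothing because such functions are independent of the $\R^{2n-2k}$ coordinates along $Y$. Once this identification is in place, the theorem follows by combining the local existence criterion from Proposition \ref{14:35} with the uniqueness-based global patching, exactly as in the proof of Proposition \ref{11:14:11}.
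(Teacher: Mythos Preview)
Your proposal is correct and follows essentially the same route as the paper: localize via Theorem \ref{16:12:53} and Proposition \ref{11:14:11}, observe that the blow-down factors as $\mathrm{id}\times(\text{complex blow-up})$, invoke Polishchuk's criterion (Proposition \ref{14:35}), and then identify the Lie algebra on $N^\ast Y$ with the one on $N^{\ast 1,0}Z$. The one place where the paper is more precise than your sketch is the bracket comparison you flag as the ``main obstacle'': the brackets $[\,,\,]_{\pi_\J}$ and $[\,,\,]_\sigma$ do \emph{not} literally coincide on conormal vectors, only agree up to a nonzero complex scalar (the paper computes $\pi_\J=-\omega_0^{-1}\oplus 4IQ$ and shows $\alpha\mapsto\alpha^{1,0}$ carries $[\,,\,]_Q$ to $\tfrac12[\,,\,]_\sigma$), which is still enough to transfer degeneracy over $\C$.
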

\begin{proof}
Pick a local chart where $Y=W\times Z\subset (\R^{2(n-k)},\omega_0)\times (\C^k,\sigma)$ with $W$ open and $Z$ a holomorphic Poisson submanifold (c.f. the proof of Proposition \ref{11:14:11}). As explained in the discussion above, the generalized complex structure lifts to the blow-up if and only if $\sigma$ lifts to the blow-up of $Z$ in $\C^k$, which we now know to be equivalent to $N^{\ast 1,0}Z$ being degenerate. Denote by $N^\ast Z$ the normal bundle of $Z$ considered as a real submanifold, which carries a complex structure because $Z$ is a complex submanifold. If $Q=Re(\sigma)$ we have 
\begin{align*}
[\alpha,\beta]_Q=d Q(\alpha,\beta)=d(\frac{1}{2}\sigma(\alpha^{1,0},\beta^{1,0})+\frac{1}{2}\bar{\sigma}(\alpha^{0,1},\beta^{0,1}))=\frac{1}{2}[\alpha^{1,0},\beta^{1,0}]_\sigma+\frac{1}{2}[\alpha^{0,1},\beta^{0,1}]_{\bar{\sigma}},\end{align*}
for $\alpha,\beta\in N^\ast Z$. Consequently the complex isomorphism $N^\ast Z\rightarrow N^{\ast 1,0}Z$ given by $\alpha\mapsto \alpha^{1,0}$ carries $[,]_Q$ over to $\frac{1}{2}[,]_{\sigma}$. In particular, $N^{\ast 1,0}Z$ is degenerate if and only if $(N^\ast Z,[,]_Q)$ is degenerate as a complex Lie algebra. Now in the local chart $N^\ast Y=N^\ast Z$ and $\pi_\J=-\omega_0^{-1}\oplus 4IQ$. Hence $[,]_Q$ and $[,]_{\pi_\J}$ agree up to a complex multiple and so one is degenerate over $\C$ if and only if the other one is.
\end{proof}
\begin{ex}
Let $(M,\J)$ be a generalized complex manifold. In \cite{2014arXiv1403.6909B} it is shown that the complex locus, i.e. the points of type $0$, carries canonically the structure of a complex analytic space. Any complex submanifold of the complex locus is then a generalized Poisson submanifold and can be blown up as soon as its conormal bundle is degenerate. The easiest applications are in complex codimension $2$ where degeneracy is automatic. For example, any point in the complex locus on a generalized complex four-manifold can be blown up. This generalizes the corresponding result from \cite{MR2574746} where it was assumed that the point lies in the smooth part of the complex locus. An example where the submanifold has positive dimension is the maximal torus $S^1\times S^1\subset S^3\times S^3$. As is shown e.g. in \cite{gualtieri-2010}, even-dimensional reductive compact Lie groups admit \textsl{generalized K\"ahler structures}, i.e. commuting pairs of generalized complex structures $(\J_1,\J_2)$ for which $(u,v)\mapsto \langle\J_1u,\J_2v\rangle$ is positive definite on $\T M$. These are built out of left- and right-invariant complex structures on the group and the complex locus for $\J_1$ consists of those points where these two coincide. Therefore the maximal torus will be a generalized Poisson submanifold for $\J_1$. In the particular example noted above the maximal torus is of complex codimension $2$ so it is automatically degenerate. More details about this example can be found in \cite{GK}.   
\label{20:24:24}
\end{ex}

\begin{ex}
Let $(M,\J)$ be a $4$--dimensional generalized complex manifold which is generically symplectic with non-empty complex locus $Z$. Since $Z$ is locally described by the vanishing of a holomorphic Poisson tensor in two complex dimensions, it looks locally like a complex curve. By the previous example we can blow up any point on $Z$ and one can use this to ``desingularize" $Z$. Indeed, as is proven for example in \cite{MR749574}, if $C\subset X$ is any complex curve on a complex smooth surface $X$, one can perform a locally finite number of blow-ups on $X$ so that the underlying analytic set of the total transform of the curve $C$ has only ordinary double points. In particular, the total transform\footnote{The \textsl{total transform} of a subset $C$ under a blow-up equals $\pi^{-1}(C)$ where $\pi$ is the blow-down map, while the \textsl{proper transform} equals $\overline{\pi^{-1}(C)\backslash E}$. } itself will be a normal crossing divisor with possible multiplicities (so in local coordinates $z_1,z_2$ it will be given by $z_1^az_2^b=0$ for some $a,b\in \Z_{>0}$). Now we do not have a global complex structure available but this desingularization procedure is purely local, so we conclude that after a (locally finite) number of blow-ups we get a generalized complex manifold whose complex locus, as a complex analytic space, has only normal crossings. 
\end{ex}

\subsection{Generalized Poisson transversals}
\label{12:15}

We now turn our attention to submanifolds which are symplectic in transverse directions.
\begin{defn}
Let $(M,\J)$ be a generalized complex manifold. A \textsl{generalized Poisson transversal} is a submanifold $Y\subset M$ with 
\begin{align}
\J(N^\ast Y)\cap (N^\ast Y)^\perp =0.
\label{17:58:30}
\end{align}
\end{defn}
\begin{rem} 
The above condition automatically implies $N^\ast Y\cap \J N^\ast Y=0$, hence generalized Poisson transversals are in particular generalized complex submanifolds\footnote{As $i^\ast L\cap N^\ast Y_\C=0$, $\mfB i(L)$ is smooth. Here $i:Y\hookrightarrow M$ denotes the inclusion.}. Note that (\ref{17:58:30}) is equivalent to $\pi_\J(N^\ast Y)+TY=TM|_Y$, i.e. $Y$ is a Poisson transversal for the underlying Poisson structure $\pi_\J$. Geometrically, $Y$ intersects the symplectic leaves of $\pi_\J$ transversally and symplectically. Note that if $\J$ is complex then $Y$ has to be an open subset while if $\J$ is symplectic then $Y$ has to be a symplectic submanifold. 
\end{rem}

\subsubsection{A normal form}

Let $Y \hookrightarrow (M,J)$ be a generalized Poisson transversal. To blow up $Y$ we need a description of a neighborhood of $Y$ in $M$. Since $Y$ is a generalized complex submanifold it has its own generalized complex structure $\J_Y$. Moreover, the splitting $TM|_Y=TY\oplus NY$, with $NY:=\pi_\J(N^\ast Y)$, induces a decomposition $(\pi_\J)|_Y=\pi_{\J_Y}+\omega_Y$, where $\pi_{\J_Y}$ coincides with the Poisson structure on $Y$ induced by $\J_Y$ and $\omega_Y\in \Gamma(\bigwedge^2 NY)$ is non-degenerate. The suggestive notation for the latter indicates that we will consider $\omega_Y$ as a symplectic form on the bundle $N^\ast Y$. In what follows we will implicitly identify $Y$ with the zero section in $N^\ast Y$.

\begin{thm} Associated to the data $(\J_Y,\omega_Y)$ there is a natural family of mutually isotopic generalized complex structures on a neighborhood of $Y$ in $N^\ast Y$. 
\label{16:24:12}
\end{thm}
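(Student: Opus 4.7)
The plan is to build the generalized complex structure on a neighborhood of the zero section in $N^\ast Y$ by combining the data $\J_Y$ with the fiberwise symplectic form $\omega_Y$, using a linear connection $\nabla$ on $N^\ast Y$ as auxiliary input. First I would use $\nabla$ to produce a horizontal distribution in $T N^\ast Y$ on a neighborhood of the zero section, which complements the vertical distribution. Second I would extend $\omega_Y$, viewed as a fiberwise 2-form, to a 2-form $\tilde{\omega}$ on a neighborhood of $Y$ via minimal coupling, so that $d\tilde{\omega}$ takes a form controlled by the curvature of $\nabla$ and the variation of $\omega_Y$. Third I would horizontally lift a local generator $\rho_Y$ of the canonical bundle $K_Y$ of $\J_Y$ to a spinor $\hat{\rho}$ on the neighborhood, and locally define the candidate canonical line by $\rho := e^{i\tilde{\omega}} \wedge \hat{\rho}$.

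Then I would verify that $\rho$ determines a generalized complex structure on a neighborhood of $Y$. The pure spinor condition is automatic since $\rho$ is a $B$-field transform of the pullback of a pure spinor. The non-degeneracy condition $(\rho, \bar{\rho})_{Ch} \neq 0$ reduces along $Y$ to $\omega_Y^{l} \wedge \rho_Y \wedge \bar{\rho}_Y^T$ being non-zero in top degree, where $2l$ is the real codimension of $Y$; this holds by fiberwise non-degeneracy of $\omega_Y$ combined with the Chevalley pairing condition for $\rho_Y$ on $Y$, and extends to a neighborhood by continuity. Involutivity is the crux: one computes $d\rho = i\, d\tilde{\omega} \wedge e^{i\tilde{\omega}} \wedge \hat{\rho} + e^{i\tilde{\omega}} \wedge d\hat{\rho}$ and shows this equals $u \cdot \rho - H \wedge \rho$ for some $u \in \Gamma(\T N^\ast Y)$ and closed real 3-form $H$. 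The $d\hat{\rho}$ term is handled using integrability of $\J_Y$ on the base modulo corrections from the horizontal lift, while the $d\tilde{\omega}$ term is absorbed by choosing $H$ compatibly.

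For the mutual isotopy claim, I would exploit that the space of linear connections on $N^\ast Y$ is affine. Given two constructions $\tilde{\J}_0, \tilde{\J}_1$ coming from connections $\nabla_0, \nabla_1$, the straight-line family $\nabla_t = (1-t)\nabla_0 + t\nabla_1$ is again a connection for each $t$, and the corresponding family $\tilde{\J}_t$ is a smooth family of almost generalized complex structures on a common neighborhood of $Y$. Each $\tilde{\J}_t$ restricts along $Y$ to the fixed data $(\J_Y, \omega_Y)$, non-degeneracy is an open condition that persists on a common smaller neighborhood, and involutivity is preserved by the construction, so $\{\tilde{\J}_t\}$ gives the required isotopy.

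The bulk of the work will lie in the involutivity computation in the second step: as in the Poisson analog \cite{2013arXiv1306.6055F} this verification is already technical, and here the holomorphic character of $\J_Y$ complicates matters since $\hat{\rho}$ is not a simple pullback of a closed form. Selecting the minimal-coupling extension $\tilde{\omega}$ and the ambient twisting 3-form $H$ (which will typically encode the curvature of $\nabla$ paired with $\omega_Y$) so that the Clifford-action identity holds on the nose is where the main effort will be concentrated.
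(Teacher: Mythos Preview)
Your approach diverges from the paper's in a way that creates real difficulties, and the place you identify as ``the bulk of the work'' is precisely where the paper's construction sidesteps the problem entirely.

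The paper does not use a connection or minimal coupling. Instead it proves directly (Lemma~\ref{13:53}) that there exists a \emph{closed} $2$--form $\sigma$ on $N^\ast Y$ which restricts to $\omega_Y\oplus 0$ along the zero section; this is a short partition-of-unity argument using a compatible Hermitian structure on the bundle. Given such a $\sigma$, the candidate structure is simply
\[
L_\sigma:=e^{i\sigma}_\ast\big(\mfB p(L_Y)\big),
\]
the $B$--field transform by the closed form $i\sigma$ of the backward image of $L_Y$ under the projection $p:N^\ast Y\to Y$. Integrability with respect to the fixed $3$--form $\widetilde H:=p^\ast H_Y$ is then automatic: $\mfB p(L_Y)$ is involutive for $p^\ast H_Y$, and a closed $B$--field transform preserves involutivity. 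No curvature terms appear, no connection-dependent $3$--form is needed, and the spinor is just $e^{i\sigma}\wedge p^\ast\rho_Y$---the ordinary pullback, not a ``horizontal lift''. The family is then parametrized by the closed extensions $\sigma$, and the isotopy between two such is obtained by a Moser argument (Lemma~\ref{20:58:39}) integrating the flow of $\J_t\dot\eta_t$, where $d\eta_t=\sigma_t-\sigma_0$.

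Your route runs into two concrete problems. First, the ``horizontal lift'' $\hat\rho$ of a spinor is not a standard operation; differential forms pull back along $p$ without any connection, and $p^\ast\rho_Y$ already does what you need. Second, and more seriously, if $\tilde\omega$ is not closed you are forced to absorb $d\tilde\omega$ into the twisting $3$--form, so your $H$ depends on $\nabla$. Then your isotopy $\nabla_t$ produces a family of generalized complex structures integrable for a \emph{varying} family $H_t$ of $3$--forms; this is not the usual notion of isotopy, and it also obstructs the subsequent step (Theorem~\ref{21:48:41}) of matching the local model to the ambient $(M,H)$, where the $3$--form must be $p^\ast H_Y$. The paper's choice of a closed $\sigma$ eliminates all of this at the outset.
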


\begin{proof}
Recall that $T(N^\ast Y)$ has a canonical decomposition along $Y$ given by 
\begin{align}
T(N^\ast Y)|_Y=N^\ast Y\oplus TY.
\label{16:24:45}
\end{align}
\begin{lem}
There exists a closed $2$--form $\sigma$ on $N^\ast Y$ which along $Y$ is given by $\omega_Y\oplus 0$.
\label{13:53}
\end{lem}
\begin{proof}
Choose an Hermitian structure $(g,I)$ on $N^\ast Y$ compatible with $\omega_Y$. Let $e_j$ be a local unitary frame with dual frame $e^j$, such that $\omega_Y=\sum_j \frac{i}{2}e^j\bar{e}^j$. We obtain local coordinates $(x,z)$ on $N^\ast Y$ by identifying $(x,z)$ with the point $\sum_j z^j e_j(x)$. Note that the $z$--coordinates are complex. If $\rho_\alpha$ is a partition of unity and $e_j^\alpha$ are local frames as above, define 
\begin{align}
\lambda:=\sum_{\alpha,j}p^\ast(\rho_\alpha)\frac{i}{2}z^{\alpha j }d\bar{z}^{\alpha j}.
\label{15:58:20} 
\end{align}
Then $\sigma:=d\lambda$ restricts to $\omega_Y$ on $Y$ and its restriction to each fiber of $N^\ast Y$ is the translation invariant extension of $\omega_Y$. In addition, this particular choice of $\lambda$ is also $U(1)$-invariant. 
\end{proof}
\noindent
If $\sigma$ is a closed extension of $\omega_Y$ as above we define a Dirac structure $L_\sigma$ on $N^\ast Y$ by 
\begin{align}
L_\sigma:=e^{i\sigma}_\ast (\mfB p(L_Y)), 
\label{14:01}
\end{align} 
where $p:N^\ast Y\rightarrow Y$ is the projection. It is integrable with respect to the $3$--form $\widetilde{H}:=p^\ast H_Y$ where $H_Y:= i^\ast H$, and along the zero section we have 
\[L_\sigma |_{{}_Y}=\{X+\xi+e-i\omega_Y(e)|X+\xi\in L_Y,e\in N^\ast Y\},\]
where we used the decomposition (\ref{16:24:45}). In particular $L_\sigma\cap \overline{L_\sigma}=0$ at $Y$, hence also in a neighborhood of $Y$ in $N^\ast Y$. We will denote the resulting generalized complex structure by $\J_\sigma$. The family of the theorem is by definition the set of $\J_\sigma$, where $\sigma$ ranges over the closed extensions of $\omega_Y$.
\begin{lem}
Let $\sigma_t$ be a family of closed $2$-forms extending $\omega_Y$. Then there exists a family $\Phi_t=(\varphi_t,B_t)$ of diffeomorphisms around $Y$ with $\Phi_0=(Id,0)$ that satisfies $\mfF \Phi_t (L_0)=L_t$ and which fixes $Y$ up to first order, i.e.  $\varphi_t|_Y=Id$, $d\varphi_t|_Y=Id$ and $B_t|_Y=0$.
\label{20:58:39}
\end{lem}
\begin{proof}
Since $\sigma_t-\sigma_0$ vanishes on $Y$, Lemma \ref{17:04:06} provides a family $\eta_t\in\Omega^1(N^\ast Y)$ with $\sigma_t-\sigma_0=d\eta_t$ and such that $\eta_t$ and its first partial derivatives vanish along $Y$. By definition,
\begin{align*}
L_t:=L_{\sigma_t}=e^{i\sigma_t}_\ast(\mfB p(L_Y))=e^{id\eta_t}_\ast(L_{\sigma_0}).
\end{align*}
Since $\eta_t$ and $d\eta_t$ vanish along $Y$, $L_t$ defines a family of generalized complex structures $\J_t$ in a neighborhood of $Y$, integrable with respect to the (fixed) $3$--form $\widetilde{H}$. Consider the time-dependent generalized vector field $\J_t\dot{\eta}_t=:X_t+\xi_t$ and let $\psi_{t,s}$ be its flow, given by 
\begin{align}
\psi_{t,s}=(\varphi_{t,s})_\ast \circ e_\ast^{-\int_s^t\varphi_{r,s}^\ast(d\xi_r+\iota_{X_r}\widetilde{H})dr}
\end{align}
where $\varphi_{t,s}$ is the flow of the time-dependent vector field $X_t$. Since $\eta_t$ together with its first derivatives vanish along $Y$, $\varphi_{t,s}$ is well defined in a neighborhood of $Y$ and fixes $Y$ to first order. We claim that 
\begin{align}
L_t=\psi_{t,0}L_0.
\label{10:38:04}
\end{align}
From the formula for $L_t$ this amounts to showing that $e^{-id\eta_t}_\ast\psi_{t,0}L_0=L_0$. We have
\begin{align}
\frac{d}{dt}e^{-id\eta_t}_\ast\psi_{t,0}(u)&=-i\lb \dot{\eta}_t, e^{-id\eta_t}_\ast\psi_{t,0}(u)\rb-e^{-id\eta_t}_\ast \lb \J_t\dot{\eta}_t,\psi_{t,0} (u)\rb\nonumber\\ &= \lb -i\dot{\eta}_t-\J_0\dot{\eta}_t, e^{-id\eta_t}_\ast\psi_{t,0}(u) \rb 
\label{13:37:52}
\end{align}
This shows that $e^{-id\eta_t}_\ast\psi_{t,0}$ integrates the adjoint action of $-i\dot{\eta}_t-\J_0\dot{\eta}_t\in\Gamma(L_0)$. Since $\Gamma(L_0)$ is involutive, (\ref{10:38:04}) indeed holds. The desired family is then given by 
\begin{align*}
\Phi_t=(\varphi_t,B_t):=(\varphi_{t,0},\int_0^t \varphi_{r,0}^\ast(d\xi_r+\iota_{{}_{X_r}}H)dr).
\end{align*}
\end{proof}
\noindent Applying this lemma to $\sigma_t:=(1-t)\sigma+t\sigma'$, where $\sigma$ and $\sigma'$ are closed extensions of $\omega_Y$, shows that indeed all members of the family are mutually isotopic. This finishes the proof of Theorem \ref{16:24:12}. 
\end{proof}

\begin{lem}
Let $\alpha_t\in\Omega_{cl}^k(E)$ be a family of closed forms on a vector bundle $E$ over $M$ which vanish along $M$. Then there exists a family $\eta_t\in\Omega^{k-1}(E)$ with $d\eta_t=\alpha_t$, such that for each $t$ the form $\eta_t$ together with its first partial derivatives vanishes along $M$.  
\label{17:04:06}
\end{lem}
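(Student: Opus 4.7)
The plan is to write down an explicit primitive via the Poincaré homotopy formula adapted to the vector bundle $E$, using the canonical Euler (radial) vector field. Let $W\in\Gamma(TE)$ denote the Euler vector field, which on each fiber of $E$ equals the position vector, and let $h_s:E\to E$, $v\mapsto sv$, denote fiberwise scaling. These are globally defined and smooth in $s\in[0,1]$, with $h_0$ factoring through $M\hookrightarrow E$ and $h_1=\mathrm{Id}$.

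For $s>0$ one has the standard identity
\begin{align*}
\frac{d}{ds}h_s^{*}\alpha_t \;=\; \frac{1}{s}\,h_s^{*}\mathcal{L}_W\alpha_t\;=\;\frac{1}{s}\,h_s^{*}d\iota_W\alpha_t\;=\;d\!\left(\tfrac{1}{s}\,h_s^{*}\iota_W\alpha_t\right),
\end{align*}
using closedness of $\alpha_t$. I would then define
\begin{align*}
\eta_t \;:=\; \int_0^{1}\frac{1}{s}\,h_s^{*}\iota_W\alpha_t\,ds,
\end{align*}
and by the fundamental theorem of calculus one obtains $d\eta_t=\alpha_t-\lim_{s\to 0}h_s^{*}\alpha_t=\alpha_t$, once convergence of the integral and the vanishing of the boundary term are established.

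Both properties follow from a local coordinate computation around a point of $M$. Choose bundle coordinates $(x,z)$ on $E$ with $z$ fiber coordinates, so $W=\sum_j z^j\partial_{z^j}$ and $h_s^{*}z=sz$, $h_s^{*}dz=s\,dz$. Writing $\alpha_t=\sum a_{IJ}(x,z)\,dx^I\wedge dz^J$, the hypothesis $\alpha_t|_M=0$ means $a_{IJ}(x,0)=0$, and only terms with $|J|\ge 1$ survive $\iota_W$. For such a term a direct computation gives
\begin{align*}
\tfrac{1}{s}\,h_s^{*}\bigl(a_{IJ}\,\iota_W(dx^I\wedge dz^J)\bigr)\;=\;s^{|J|-1}\,a_{IJ}(x,sz)\sum_{\ell}(-1)^{|I|+\ell-1}z^{j_\ell}\,dx^I\wedge dz^{J\setminus j_\ell}.
\end{align*}
Since $a_{IJ}(x,sz)=O(s)$ as $s\to 0$, the integrand extends smoothly to $s=0$ with value $0$, so $\eta_t$ is a well-defined smooth $(k-1)$-form and the boundary term $\lim_{s\to 0}h_s^{*}\alpha_t$ vanishes.

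For the vanishing-of-derivatives claim, the same local expression makes things transparent: the coefficient of each $dx^I\wedge dz^{J\setminus j_\ell}$ in $\eta_t$ has the form $z^{j_\ell}\cdot F(x,z)$ with $F(x,z)=\int_0^1 s^{|J|-1}a_{IJ}(x,sz)\,ds$, and $F(x,0)=0$ because $a_{IJ}(x,0)=0$. Hence $\eta_t$ vanishes on $M$, and each first partial derivative $\partial_{z^m}\eta_t$ or $\partial_{x^p}\eta_t$ at $z=0$ is a sum of two types of terms: either the $z^{j_\ell}$ factor is differentiated, producing the prefactor $F(x,0)=0$; or $F$ is differentiated, producing an overall factor of $z^{j_\ell}$ which kills it at $z=0$. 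Smoothness in $t$ is automatic from smoothness of $\alpha_t$ and differentiation under the integral sign. The main thing to get right is bookkeeping of the factors of $s$ and $z$ in the local expression, which is why I would perform the computation for a single monomial $a_{IJ}\,dx^I\wedge dz^J$ and appeal to linearity.
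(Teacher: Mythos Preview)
Your proof is correct and follows essentially the same approach as the paper: the paper defines $\eta_t=\iota_V\int_0^1\frac{1}{s}L_s^{*}\alpha_t\,ds$ with $V$ the Euler field and $L_s$ fiberwise multiplication by $s$, which (using $(h_s)_*W=W$) is exactly your $\int_0^1\frac{1}{s}h_s^{*}\iota_W\alpha_t\,ds$. The paper simply asserts that this $\eta_t$ has the stated vanishing properties, whereas you spell out the coordinate bookkeeping that justifies it.
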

\begin{proof}
Let $V$ denote the Euler vector field on $E$, i.e. $V_\xi=\xi$ for $\xi\in E$. Its flow is given by $\varphi_s(\xi)=e^s\xi$, and we have 
\begin{align*}
\alpha_t=\underset{s\rightarrow -\infty}{\text{lim}} \big( \varphi_0^\ast \alpha_t-\varphi_s^\ast \alpha_t\big)
=\int_{-\infty}^0 \frac{d}{ds}\varphi_s^\ast \alpha_t ds=d\Bigg(\iota_{{}_V}\int_{-\infty}^0\varphi_s^\ast \alpha_t ds\Bigg)=:d\eta_t. 
\end{align*}
Another formula for $\eta_t$ is given by $\eta_t=\iota_{{}_V}\int_0^1 \frac{1}{s}L_s^\ast \alpha_tds$, where $L_s$ denotes left-multiplication by $s$ on $E$. The forms $\eta_t$ then satisfy all the properties of the lemma. 
\end{proof}
\noindent Theorem \ref{16:24:12} shows that any symplectic vector bundle over a generalized complex manifold has a generalized complex structure for which the base is a generalized Poisson transversal. The following theorem shows that all generalized Poisson transversals locally arise from this construction. 
\begin{thm}
There is a natural family of mutually isotopic embeddings from a neighborhood of $Y$ in $N^\ast Y$ to a neighborhood of $Y$ in $M$, which pull back $\J$ to one of the structures constructed in Theorem \ref{16:24:12}. \label{21:48:41}
\end{thm}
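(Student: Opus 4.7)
The plan is to realize the normal form in three stages: a preliminary tubular embedding, a $B$-field correction to match the twisting $3$-forms, and a Moser-type isotopy that brings the resulting pulled-back structure into the form $\J_\sigma$ constructed in Theorem \ref{16:24:12}.

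First, by the ordinary tubular neighborhood theorem pick a diffeomorphism $\psi_0\colon V\to U$ from a neighborhood $V$ of $Y$ in $N^\ast Y$ onto a neighborhood $U$ of $Y$ in $M$, restricting to the identity on $Y$ and with derivative along $Y$ realizing the natural identification $T(N^\ast Y)|_Y = N^\ast Y\oplus TY \xrightarrow{\pi_\J\oplus \mathrm{Id}} NY\oplus TY = TM|_Y$. Because $\psi_0$ is the identity on $Y$, the closed $3$-form $\psi_0^\ast H - p^\ast H_Y$ vanishes on $Y$, and Lemma \ref{17:04:06} produces $B_0\in\Omega^2(V)$ with $dB_0 = \psi_0^\ast H - p^\ast H_Y$ whose value and first partial derivatives vanish along $Y$. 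The generalized map $\Phi_0:=(\psi_0,-B_0)\colon(V,p^\ast H_Y)\to(M,H)$ then pulls $L$ back to a Dirac structure $\widetilde{L}:=\mfB\Phi_0(L)$, defining a generalized complex structure $\widetilde{\J}$ on $V$ integrable with respect to $p^\ast H_Y$.

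Second, a direct pointwise calculation (using the explicit formulas for $\mfB p$ and $e^{i\sigma}_\ast$, together with the fact that $B_0$ vanishes on $Y$) shows that $\widetilde{L}|_Y = L_\sigma|_Y$ for every closed extension $\sigma$ of $\omega_Y$: both are determined by the data $(\J_Y,\omega_Y)$ under the identification fixed in the first step. Fix the extension $\sigma$ produced by Lemma \ref{13:53}. Since $\widetilde{L}$ and $L_\sigma$ agree along $Y$ and are both non-degenerate there, near $Y$ one can write $\widetilde{L}$ as the graph, over $L_\sigma$, of a smooth section $\varepsilon$ of $\Lambda^2\bar{L}_\sigma$ (a Maurer--Cartan element in the deformation complex of $L_\sigma$) whose value and first partial derivatives vanish on $Y$.

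Third, interpolate via the family $L_t$ determined by $\varepsilon_t:=t\varepsilon$, a smooth path of integrable Dirac structures connecting $L_\sigma$ at $t=0$ to $\widetilde{L}$ at $t=1$. A Moser-type argument, analogous to the one in Lemma \ref{20:58:39}, produces a time-dependent infinitesimal symmetry that vanishes to first order on $Y$, and whose flow $\psi_t$ is therefore defined on a neighborhood of $Y$, fixes $Y$ pointwise, and satisfies $\mfF\psi_t(L_\sigma)=L_t$. The composition $\Phi_0\circ\psi_1$ is the desired embedding, pulling $\J$ back to $\J_\sigma$. Mutual isotopy among the embeddings obtained for different choices of $\psi_0$, $B_0$, or $\sigma$ then follows by combining the same Moser construction with Lemma \ref{20:58:39}, which already realizes the isotopy between different members of the family $\{\J_\sigma\}_\sigma$. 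The main obstacle is the simultaneous construction of the Maurer--Cartan representative $\varepsilon$ vanishing to first order on $Y$ and of the Moser vector field integrating it; both are relative-Poincar\'e-type assertions, and they succeed precisely because the choices of $\psi_0$ and $B_0$ in the first step were arranged to produce pointwise agreement of $\widetilde{\J}$ and $\J_\sigma$ along $Y$.
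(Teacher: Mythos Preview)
There are two genuine gaps. The more serious is in your Moser step: the linear interpolation $\varepsilon_t=t\varepsilon$ does \emph{not} give a path of integrable Dirac structures. Integrability of the graph of $\varepsilon_t$ over $L_\sigma$ is the Maurer--Cartan equation $d_{L_\sigma}\varepsilon_t+\tfrac12[\varepsilon_t,\varepsilon_t]=0$, which is quadratic; knowing it at $t=0,1$ says nothing about intermediate $t$. Yet the Moser argument needs $L_t$ integrable throughout, since the flow of any Courant infinitesimal symmetry carries Dirac structures to Dirac structures. Lemma~\ref{20:58:39} escapes this only because there the path is $L_t=e^{id\eta_t}_*L_0$, a family of closed $B$-field transforms, automatically integrable. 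No such substitute path is available here; this is precisely why the normal form for Poisson transversals (and its generalized-complex version) is proved constructively rather than by Moser.

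The second gap is the pointwise claim $\widetilde L|_Y=L_\sigma|_Y$. The restriction $\J|_{\T M|_Y}$ is \emph{not} determined by the pair $(\J_Y,\omega_Y)$: it can mix the summands of $TM|_Y=NY\oplus TY$. For instance, on $(\R^2,\omega_{st})\times(\C,I)$ twist by the closed $B$-field $dq\wedge dx$; the submanifold $Y=\{0\}\times\C$ remains a generalized Poisson transversal with the same $(\J_Y,\omega_Y)$, but $\J$ along $Y$ now has off-diagonal terms that survive pullback by your $\psi_0$. Your $B_0$, forced by Lemma~\ref{17:04:06} to vanish on $Y$, cannot absorb them. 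The paper sidesteps both problems by a direct construction: a connection on $TM$ and $\pi_\J$ determine a spray $V$ on $T^*M$ whose time-one flow gives the embedding $\exp$; the extension $\sigma$ and the correcting $2$-form are then built from this same flow and $\omega_{can}$, and a single flow identity on $\mfB p(L)$ shows $(\exp,B)$ is holomorphic---no interpolation between generalized complex structures is needed.
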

\begin{proof}
Let $p:T^\ast M\rightarrow M$ be the cotangent bundle and choose an arbitrary connection  $\nabla$ on $TM$, whose dual connection on $T^\ast M$ we also denote by $\nabla$. Using the Poisson structure $\pi_\J$ we obtain a vector field $V$ on $T^\ast M$, whose value at $\xi\in T^\ast M$ is given by $V_\xi:=\pi_\J(\xi)^h_\xi$. We denote by $\varphi_t:T^\ast M\rightarrow T^\ast M$ its flow. 
\begin{lem}
The map $exp:=p\circ \varphi_1|_{{}_{N^\ast Y}}:N^\ast Y \partto M$ gives a diffeomorphism from a neighborhood of $Y$ in $N^\ast Y$ onto an open neighborhood of $Y$ in $M$. If $\nabla'$ is a different connection then $exp'$ is isotopic to $exp$ via maps which are constant on $Y$ up to first order. 
\label{17:14:20}
\end{lem}
\begin{proof}
By definition of $V$ we have $L_s^\ast V=sV$ for $s\in \R$, where $L_s$ denotes multiplication by $s$ on the fibers of $T^\ast M$. It follows that\footnote{This equality is similar to the more familiar equality $\gamma_{{}_{sX}}(t)=\gamma_{{}_X}(st)$ for geodesics.} $\varphi_t(L_s\xi)=L_s(\varphi_{st}(\xi))$ for $\xi\in T^\ast M$. Hence,  
\begin{align*}
d_y\varphi_t(\xi)=\left. \frac{d}{ds} \right |_{s=0} \varphi_{t}(L_s\xi)=\xi+t\pi_\J(\xi),
\end{align*}
for $y\in Y\subset N^\ast Y$. Since $V$ vanishes at $Y$ we have $exp|_{{}_Y}=Id$ and so
\begin{align}
d_y\varphi_t(\xi,v)=(\xi,v+t\pi_\J(\xi))
\label{21:45:59}
\end{align}
in terms of the decomposition (\ref{16:24:45}). Composing with $p$ gives 
$
d_{y}exp(\xi,v)=v+\pi_\J(\xi),
$ hence by transversality of $Y$ we see that $exp$ is a local diffeomorphism. Since $exp|_{{}_Y}=Id$ and $Y$ is properly embedded, $exp$ is a diffeomorphism around $Y$. If $\nabla'$ is a different connection there is a path of connections $\nabla^t$ from $\nabla$ to $\nabla'$, whose exponentials $exp^t$ give an isotopy. Since (\ref{21:45:59}) is independent of $\nabla^t$, the $exp^t$ all agree up to first order at $Y$.
\end{proof} 
\noindent We will now construct explicitly one of the generalized complex structures from Theorem \ref{16:24:12} together with a $2$-form $B$ on $N^\ast Y$ such that $(exp,B)$ is holomorphic. For the proof of the following lemma recall that for $X,Y\in TM$, $\alpha, \beta\in T^\ast M$, 
\begin{align}
(\omega_{can})_{y}(\alpha+X,\beta+Y)=\alpha(Y)-\beta(X)
\label{09:51:16}
\end{align}
in terms of $T(T^\ast M)|_{{}_M}=T^\ast M\oplus TM$. 
\begin{lem}
Define  
\begin{align}
\widetilde{\sigma}_t:=-\int_0^t(\varphi_s)^\ast \omega_{can}ds \ \in \ \Omega^2_{cl}(T^\ast M),
\label{15:12:03}
\end{align} 
where $\varphi_s$ is the flow of the vector field $V$. Then $\sigma:=i^\ast\widetilde{\sigma}_1$ is a closed extension of $\omega_Y$, where $i:N^\ast Y\hookrightarrow T^\ast M$ denotes the inclusion. 
\label{17:10:48}
\end{lem}
\begin{proof}
Using (\ref{21:45:59}) and (\ref{09:51:16}) we see that 
\begin{align*}
\sigma_{{}_{y}}(\alpha+X,\beta+Y)=&-\int_0^1 (\omega_{can})_{y}\big(\alpha+(X+s\pi_\J(\alpha)),\beta+(Y+s\pi_\J(\beta))\big)	ds\\
=&-\int_0^1 2s \alpha(\pi_\J(\beta))ds
=\omega_Y(\alpha,\beta)
\end{align*}
for all $\alpha,\beta\in N^\ast Y$ and $X,Y\in TY$, proving the lemma. 
\end{proof}
\noindent
The vector field $V$ is part of the generalized vector field $\mcV$ on $T^\ast M$ where $\mcV_\xi:=(\J\xi)_\xi^h$. If $\psi_t$ denotes the flow of $\mcV$ on $\T (T^\ast M)$ then a computation similar to (\ref{13:37:52}) shows that $\psi_t e^{i\widetilde{\sigma}_t}_\ast$ integrates the adjoint action of $-i\lambda_{can}-\mcV$. Since $(-i\lambda_{can}-\mcV)_\xi=(-i\xi-\J\xi)_\xi^h\in \mfB p(L)$ and $\mfB p(L)$ is involutive, $\psi_t e^{i\widetilde{\sigma}_t}_\ast$ preserves $\mfB p(L)$ and so   
\begin{align}
e^{i\widetilde{\sigma}_t}_\ast \mfB p(L)= \psi_{-t}\mfB p(L)
\label{14:49}
\end{align} 
as Dirac structures on $T^\ast M$. Here is an overview of all the maps involved:
\begin{align*}
\xymatrix{
N^\ast Y \ar[d]^p \ar[r]^i &T^\ast M \ar[d]^p \ar[r]^{\varphi_t} & T^\ast M \ar[ld]^p \\
Y \ar[r]          &M}
\end{align*}
The left square is commutative but the right triangle is not. Now if we apply $\mfB i$ to (\ref{14:49}) at $t=1$, the left hand side becomes $e^{i\sigma}_\ast \mfB i \mfB p(L)=e^{i\sigma}_\ast\mfB p(L_Y)$ where $\sigma=i^\ast \widetilde{\sigma}_1$. This is precisely one of the structures from Theorem \ref{16:24:12}. If we write $\psi_t=(\varphi_t)_\ast e^{-B_t}_\ast$ (see (\ref{10:12:54})), the right hand side becomes
\begin{align*}
\mfB i (\psi_{-1} \mfB p(L))=\mfB i \mfB\Phi_1 \mfB p (L)=\mfB(p\circ \Phi_1 \circ i) (L)
\end{align*}
where $\Phi_t:=(\varphi_t,B_t)$. Now $p\circ \Phi_1\circ i=(exp, i^\ast B_1)$, so if we define $B:=i^\ast B_1$ then $(exp,B)$ is indeed holomorphic. Note that both $exp$ and $B$ only depend on $\nabla$ and $\J$. If $\nabla'$ is a different connection we choose a path of connections $\nabla^t$ connecting them, giving rise to a family of embeddings $(exp^t,B^t)$. As in Lemma \ref{17:14:20} one can show that this family fixes $Y$ up to first order. This completes the proof of Theorem \ref{21:48:41}. \end{proof}

\subsubsection{Blowing up}

In this section we will use the normal form theorem for $Y$ to construct the symplectic version of the blow-up. To motivate the upcoming discussion let us recall how to blow up a point using symplectic cuts (cf. \cite{MR1338784}). Let $\omega_{st}=\frac{i}{2}(dwd\bar{w}+dzd\bar{z})$ be the standard symplectic structure on $\C\times \C^n$ and consider the Hamiltonian $S^1$-action given by $e^{i\theta}\cdot(w,z)=(e^{i\theta}w,e^{-i\theta}z)$, with moment map 
\begin{align}
\mu(w,z)=\frac{1}{2}(|z|^2-|w|^2).
\label{14:18}
\end{align}
Now $S^1$ acts freely on $\mu^{-1}(\frac{1}{2}\epsilon^2)$ for $\epsilon>0$ and the map $\kappa:\mu^{-1}(\frac{1}{2}\epsilon^2)\rightarrow \C^n\times \mbP^{n-1}$ given by
\begin{align*}
\kappa:(w,z)\mapsto (\frac{wz}{|z|},[z])
\end{align*}
induces a diffeomorphism from $\mu^{-1}(\frac{1}{2}\epsilon^2)/S^1$ onto $\widetilde{\C}^n=\{(x,l)|x\in l\}$, the blow-up of $\C^n$ at the origin. It is a well-known fact that $\kappa^\ast (pr_1^\ast \omega_{st}+\epsilon^2pr_2^\ast\omega_{FS})=\omega_{st}$, giving an explicit description of the symplectic form on the reduced space. Finally, consider the following slice for the $S^1$-action: 
\begin{align*}
\varphi:\C^n\backslash \overline{B_\epsilon} \rightarrow \mu^{-1}(\frac{1}{2}\epsilon^2), \ \ \ \ \ u &\mapsto (\sqrt{|u|^2-\epsilon^2},u).
\end{align*}
Here $B_\epsilon$ is the ball of radius $\epsilon$. Clearly $\varphi^\ast \omega_{st}=\frac{i}{2}dud\bar{u}$, which shows that the symplectic quotient $\mu^{-1}(\frac{1}{2}\epsilon^2)/S^1$ is symplectomorphic, away from the exceptional divisor, to $(\C^n\backslash \overline{B_\epsilon},\omega_{st})$. 
\newline
\newline
To use this in our setting we need a reduction procedure for generalized complex structures. A general reduction theory has been introduced in \cite{MR2323543}, but we only need a very special case which we will present here. In what follows, an $S^1$-action on $(Z,H,\J)$ is understood to be an $S^1$-action on the manifold $Z$ which preserves $\J$ and for which $\iota_XH=0$, where $X$ is the associated action vector field. In analogy with symplectic geometry we call $\mu:Z\rightarrow \R$ a \textsl{moment map} if $\J X=d\mu$. 
\begin{prop}
\label{17:31:26}
Suppose we have an $S^1$-action on $(Z,H,\J)$ with moment map $\mu$. If $i:\mu^{-1}(c)\hookrightarrow Z$ is a regular level set with quotient $q: \mu^{-1}(c)\rightarrow \mu^{-1}(c)/S^1$, then $\mfF q(\mfB i(L))$ gives a generalized complex structure $\J'$ on $\mu^{-1}(c)/S^1$. If $\rho$ is a local spinor for $\J$ which is $S^1$-invariant, then $i^\ast \rho=q^\ast \rho'$ for a unique form $\rho'$ on the quotient which is a spinor for $\J'$. 
\end{prop}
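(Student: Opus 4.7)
The plan is to exploit the interplay between $\J X = d\mu$ and $\J^2=-1$, which forces the algebraic element $X - id\mu$ to lie in $L$, and to argue that the backward image along $i$ is degenerate exactly in the orbit direction, so that passing to the quotient restores non-degeneracy. I would work throughout with both the Dirac and spinor descriptions.

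First I would collect the algebraic consequences of $\J X = d\mu$. Since $\J^2=-1$, the subbundle spanned by $X$ and $d\mu$ is $\J$-invariant, and a direct computation gives $\J(X - id\mu) = i(X - id\mu)$, so $u := X - id\mu$ is a section of $L$. Next, $d\mu(X) = 2\langle\J X,X\rangle = 0$ by orthogonality of $\J$, so $X$ is tangent to $Y := \mu^{-1}(c)$ and generates the $S^1$-action on $Y$. At a regular level, $N^\ast Y = \R\cdot d\mu|_Y$ and $\ker q_\ast = \R\cdot X$.

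Second I would analyze $\mfB i(L)$. Because $i^\ast d\mu = 0$, the section $u$ restricts to $X \in \mfB i(L)\cap \T Y_\C$, and hence $X \in \mfB i(L) \cap \overline{\mfB i(L)}$. A rank count shows $\mfB i(L)$ is maximal isotropic (so constant rank, and in particular smooth: note $N^\ast Y \cap \J N^\ast Y = 0$ since $\J d\mu = -X \in TY$), and that $\mfB i(L) \cap \overline{\mfB i(L)} = \C\cdot X$ is exactly one-dimensional. Integrability of $\mfB i(L)$ with respect to $i^\ast H$ is a standard property of backward images of Dirac structures along embeddings. Now I would form $\mfF q(\mfB i(L))$: since the vertical bundle $\ker q_\ast = \R X$ coincides precisely with the $\mfB i(L)$-degeneracy and the whole configuration is $S^1$-invariant, the forward image has constant rank, is smooth, and inherits the maximal-isotropy condition. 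Non-degeneracy $L'\cap\overline{L'}=0$ on the quotient follows because any element of the intersection lifts to an element of $\mfB i(L)\cap\overline{\mfB i(L)} = \C X$, which maps to zero under $q_\ast$. Courant-integrability on the quotient, with respect to the descended closed $3$-form (well-defined since $\iota_X H=0$ and $H$ closed imply $\mathcal{L}_X H =0$), follows from integrability on $Y$ together with $S^1$-invariance of $\mfB i(L)$ and the constant-rank hypothesis.

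For the spinor statement I would take an $S^1$-invariant local pure spinor $\rho$ with $L=\mathrm{Ann}(\rho)$. The identity $(X-id\mu)\cdot\rho=0$ reads $\iota_X\rho = id\mu\wedge\rho$, which after pulling back to $Y$ (where $i^\ast d\mu=0$) gives $\iota_X(i^\ast\rho)=0$; combined with $\mathcal{L}_X(i^\ast\rho)=0$ from invariance, this shows $i^\ast\rho$ is basic and descends uniquely to a form $\rho'$ on the quotient with $q^\ast\rho' = i^\ast\rho$. One then checks directly that $\rho'$ is annihilated by exactly the forward-image Dirac structure $L'$ constructed above, and that $(\rho',\bar\rho')_{Ch}\neq 0$ at $q(y)$ follows from $(\rho,\bar\rho)_{Ch}\neq 0$ at $y$ together with the Chevalley-pairing compatibility for basic forms on a principal circle bundle.

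The most delicate step is the verification of non-degeneracy and integrability of $\mfF q(\mfB i(L))$; both hinge on the precise matching between the degenerate direction $\C\cdot X$ of the restricted Dirac structure and the vertical direction of $q$, which is guaranteed by the moment-map condition $\J X=d\mu$. Everything else is a formal consequence of the functoriality of backward and forward images of Dirac structures combined with standard $S^1$-invariance arguments.
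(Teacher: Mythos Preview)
Your proof is correct and follows essentially the same route as the paper's: both hinge on the observation that $X-id\mu\in L$, that $\mfB i(L)\cap\overline{\mfB i(L)}=\C\cdot X$ coincides with $\ker q_\ast$, and that $(X-id\mu)\cdot\rho=0$ forces $i^\ast\rho$ to be basic. The only cosmetic difference is that the paper obtains smoothness of $\mfB i(L)$ directly from the fact that $i$ has real codimension one (so the backward image is automatically smooth), whereas you argue via $N^\ast Y\cap\J N^\ast Y=0$; your version also makes the integrability and Chevalley-pairing checks explicit, which the paper leaves implicit.
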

\begin{proof}
The inclusion of a regular level set $i:\mu^{-1}(c)\hookrightarrow Z$ has real codimension $1$ so that $\mfB i(L)$ is automatically smooth, and we have
\begin{align}
\mfB i(L)\cap \mfB i(\bar{L})=\C\cdot X.
\label{12:42:33}
\end{align}   
By the assumption $\iota_XH=0$ we can write $H=q^\ast H'$ for a (unique) $3$--form $H'$ on the quotient, so $q$ is a generalized map. It satisfies $ker(dq)\cap \mfB i(L)=\C\cdot X$, which is of constant rank $1$ so the forward image $\mfF q(\mfB i(L))$ is smooth, and projects down to $\mu^{-1}(c)/S^1$ because $\mfB i(L)$ is $S^1$-invariant. It is generalized complex because of (\ref{12:42:33}) and the fact that $X$ spans the kernel of $q_\ast$. Let $\rho$ be a local spinor for $L$ which is $S^1$-invariant. Then $i^\ast \rho$ is nonzero on $\mu^{-1}(c)$ and is an $S^1$-invariant spinor for $\mfB i(L)$. Moreover, 
\begin{align*}
0=(X-i\J X)\cdot \rho=(X-id\mu)\cdot \rho
\end{align*}
implies that $\iota_{{}_X} i^\ast \rho=0$, hence $i^\ast \rho$ comes from a unique differential form on $\mu^{-1}(c)/S^1$. This will be a spinor for the induced generalized complex structure on the quotient.
\end{proof}
\noindent Consider now a generalized Poisson transversal $Y\subset (M,\J)$, with $\omega_Y$ the induced symplectic structure on $N^\ast Y$. As in the proof of Lemma \ref{13:53} we choose a compatible Hermitian structure $(g,I)$ on the bundle $N^\ast Y$ and use it to construct an $S^1$-invariant $1$--form $\lambda$ on the manifold $N^\ast Y$ of the form (\ref{15:58:20}). In particular its differential $\sigma=d\lambda$ is a closed extension of $\omega_Y$ which is $S^1$-invariant and whose restriction to the fibers is translation invariant. Consider the $S^1$-action on $Z:=\C\times N^\ast Y$ given by 
\begin{align*}
e^{i\theta}\cdot (w,z)=(e^{i\theta}w,e^{-i\theta}z),
\end{align*} 
and denote by $X\in \Gamma(TZ)$ the induced action vector field. We equip $Z$ with the $3$--form $p^\ast H_Y$ and the generalized complex structure which is the product of the standard symplectic structure on $\C$ and $\J_\sigma$ on $N^\ast Y$ as defined by Equation (\ref{14:01}). 
\begin{lem}
The map $\mu:Z\rightarrow \R$ given by $\mu(w,z):=\frac{1}{2}g(z,z)-\frac{1}{2}|w|^2$ is a moment map.
\end{lem}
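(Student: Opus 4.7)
Since the geometry on $Z=\C\times N^\ast Y$ is a product of $(\C,\omega_{st})$ and $(N^\ast Y,\J_\sigma)$, and the action is the product of the standard $S^1$--rotation on $\C$ with the conjugate rotation on the fibres of $N^\ast Y$, the moment map identity $\J X=d\mu$ splits into two independent calculations. Writing $X=X_w+X_z$, where $X_w$ is the rotation vector field on $\C$ and $X_z=-Iz$ is the vertical vector field on $N^\ast Y$ generating the $S^1$--action on fibres, it suffices to prove $\J_{\omega_{st}}X_w=d(-\tfrac12|w|^2)$ and $\J_\sigma X_z=d(\tfrac12 g(z,z))$ separately, and add them.

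\textbf{The $\C$ factor.} For the symplectic generalized complex structure, $\J_{\omega_{st}}$ acts on a real vector field by $V\mapsto \iota_V\omega_{st}$. A one-line computation gives $\iota_{X_w}\omega_{st}=-x\,dx-y\,dy=d(-\tfrac12|w|^2)$, which is the first half of $d\mu$.

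\textbf{The $N^\ast Y$ factor.} This is the main step. The key observation is that $X_z$ is vertical for $p:N^\ast Y\to Y$, so $p_\ast X_z=0$ and therefore $X_z\in\mfB p(L_Y)$ (as the element $X_z+0$ of the complexified tangent-plus-cotangent bundle). Consequently $X_z-i\iota_{X_z}\sigma$ lies in $L_\sigma=e^{i\sigma}_\ast(\mfB p(L_Y))$. Setting $u:=\tfrac12(X_z-i\iota_{X_z}\sigma)\in L_\sigma$ we have $u+\bar u=X_z$, and hence
\[
\J_\sigma X_z = \J_\sigma(u+\bar u) = i\,u - i\,\bar u = \iota_{X_z}\sigma.
\]
Thus the problem reduces to the classical Hamiltonian identity $\iota_{X_z}\sigma=d(\tfrac12 g(z,z))$ for the $U(1)$--invariant $2$--form $\sigma=d\lambda$ with $\lambda=\sum_{\alpha,j}p^\ast(\rho_\alpha)\tfrac{i}{2}z^{\alpha j}d\bar z^{\alpha j}$. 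Expanding $\sigma=\sum_\alpha dp^\ast\rho_\alpha\wedge \omega_\alpha+\sum_\alpha p^\ast\rho_\alpha\,d\omega_\alpha$, one checks in a local unitary frame that $\iota_{X_z}d\omega_\alpha=d(\tfrac12 g(z,z))$ (independent of $\alpha$) and $\iota_{X_z}\omega_\alpha=-\tfrac12 g(z,z)$ (also independent of $\alpha$, since $e^\alpha_j$ is unitary). The cross-terms in the first sum then telescope away via $\sum_\alpha dp^\ast\rho_\alpha=d(\sum_\alpha p^\ast\rho_\alpha)=0$, leaving $\iota_{X_z}\sigma=d(\tfrac12 g(z,z))$.

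\textbf{Conclusion.} Adding the two contributions yields $\J X=d(-\tfrac12|w|^2)+d(\tfrac12 g(z,z))=d\mu$. The only real obstacle is the translation from the symplectic-style identity $\iota_{X_z}\sigma=d(\tfrac12 g(z,z))$ to the generalized complex identity $\J_\sigma X_z=d(\tfrac12 g(z,z))$; this is handled by noting that verticality of $X_z$ places it inside $\mfB p(L_Y)$, which lets us read off $\J_\sigma X_z$ directly from the explicit $B$-field twist defining $L_\sigma$.
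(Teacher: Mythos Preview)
Your proof is correct and follows essentially the same strategy as the paper: split $X$ into its two factors, use verticality of $X_z$ to identify $\J_\sigma X_z$ with $\iota_{X_z}\sigma$, and then verify the resulting Hamiltonian identity. The only difference is that the paper exploits the $S^1$--invariance of $\lambda$ together with Cartan's formula to reduce the last step to the pointwise check $-\iota_X(\lambda_{st}+\lambda)=\mu$, whereas you compute $\iota_{X_z}\sigma$ directly from the partition-of-unity expansion; both routes are valid and of comparable length.
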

\begin{proof}
We can write $X=(X_1,X_2)$ on $\C\times N^\ast Y$ with $X_i$ the corresponding action vector field on the separate factors. In particular $X_2$ is vertical and by definition of $\J_\sigma$ we have $\J(X_1,X_2)=(\omega_{st}(X_1),\sigma(X_2))$. Since $\omega_{st}+\sigma=d(\lambda_{st}+\lambda)$ where both $\lambda_{st}$ and $\lambda$ are $S^1$-invariant, we get $\J X=-d\iota_X(\lambda_{st}+\lambda)$. Hence it suffices to show that $-\iota_X(\lambda_{st}+\lambda)=\mu$. This is a fiberwise equality and can be verified on $\C\times \C^n$.  
\end{proof}
\begin{rem}
If one starts with an arbitrary extension $\sigma=d\lambda$ of $\omega_Y$ one can average it over $S^1$ to render it invariant, and the map $-\iota_X(\lambda_{st}+\lambda)$ is again a moment map. The advantage of our choice above is that the moment map has an explicit description in terms of a metric.  
\end{rem}
\noindent For $\epsilon>0$, Proposition \ref{17:31:26} implies that $\widetilde{N^\ast Y}_\epsilon:=\mu^{-1}(\frac{1}{2}\epsilon^2)/S^1$ is generalized complex, which differentiably equals the blow-up of $Y$ in $N^\ast Y$. It remains to show that this blow-up can be glued back into the original manifold $M$ to produce the blow-up of $Y$ in $M$. For that we consider the slice 
\begin{align}
\widetilde{\varphi} :  N^\ast Y\backslash \overline{B_\epsilon}\hookrightarrow \mu^{-1}(\frac{1}{2}\epsilon^2)\subset Z, \ \ \ \ \  z\mapsto (\sqrt{|z|^2-\epsilon^2},z).
\label{18:00:10}
\end{align} 
Here $\overline{B_\epsilon}$ is the disc bundle of radius $\epsilon$. If $q$ denotes the quotient map of the $S^1$-action, we obtain a diffeomorphism 
\begin{align*}
\varphi:=q\circ \widetilde{\varphi}:N^\ast Y\backslash\overline{B_\epsilon}\longrightarrow \widetilde{N^\ast Y}_\epsilon\backslash E
\end{align*}
where $E$ denotes the exceptional divisor. To show that $\varphi$ is holomorphic it suffices, by definition of the generalized complex structure on the quotient, to show that $\widetilde{\varphi}$ pulls back a local spinor on $Z$ to a local spinor for $\J_\sigma$. If $\rho=e^{i\omega_{st}+i\sigma}\wedge p^\ast \rho_Y$ is such a spinor on $Z$, then from the definition of $\widetilde{\varphi}$ we see that indeed $\widetilde{\varphi}^\ast \rho=e^{i\sigma}\wedge p^\ast \rho_Y$ is a spinor for $\J_\sigma$.

\begin{thm}
\label{20:28:30}
Let $Y\subset (M,\J)$ be a compact generalized Poisson transversal. Then the differentiable blow-up of $Y$ in $M$ carries a generalized complex structure which, outside of a neighborhood of the exceptional divisor, is isomorphic to the complement of a neighborhood of $Y$ in $M$. The result is, up to deformation, independent of choices. 
\end{thm}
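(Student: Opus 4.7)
The plan is to combine the normal form theorem for generalized Poisson transversals (Theorem \ref{21:48:41}) with the symplectic-cut construction described in the paragraphs preceding the statement, and then to glue. More concretely, I would first apply Theorem \ref{21:48:41} to produce a generalized holomorphic embedding $(exp,B):N^\ast Y\partto M$ from a tubular neighborhood of the zero section onto an open neighborhood $V$ of $Y$. The pulled-back generalized complex structure on that neighborhood is one of the structures $\J_\sigma$ from Theorem \ref{16:24:12}, where $\sigma=d\lambda$ is built from an $S^1$-invariant primitive associated to a Hermitian structure $(g,I)$ on $N^\ast Y$ compatible with $\omega_Y$. Since $Y$ is compact we may, by shrinking, assume that the disc bundle $\overline{B_\epsilon}\subset N^\ast Y$ lies inside the domain of $exp$ for all sufficiently small $\epsilon>0$.

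Next, I would perform the symplectic cut on $Z=\C\times N^\ast Y$ using the moment map $\mu(w,z)=\tfrac{1}{2}g(z,z)-\tfrac{1}{2}|w|^2$. Proposition \ref{17:31:26} gives a generalized complex structure $\widetilde{\J}_\sigma$ on $\widetilde{N^\ast Y}_\epsilon:=\mu^{-1}(\tfrac{1}{2}\epsilon^2)/S^1$, and the slice map $\varphi:N^\ast Y\setminus\overline{B_\epsilon}\to \widetilde{N^\ast Y}_\epsilon\setminus E$ is a generalized holomorphic diffeomorphism, as verified just above the statement via the explicit pullback of a spinor. Composing $\varphi^{-1}$ with $(exp,B)$ yields a generalized holomorphic identification of an annular neighborhood of $E$ in $\widetilde{N^\ast Y}_\epsilon$ with the open set $V\setminus exp(\overline{B_\epsilon})\subset M$. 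I then define the blow-up by gluing:
\[
\widetilde{M}:=\bigl(M\setminus exp(\overline{B_{\epsilon/2}})\bigr)\sqcup \widetilde{N^\ast Y}_\epsilon\big/\sim,
\]
where $\sim$ identifies the overlap via $(exp,B)\circ \varphi^{-1}$. Since both pieces are generalized complex and the gluing is holomorphic, $\widetilde{M}$ inherits a well-defined generalized complex structure. On $\widetilde{M}\setminus$(neighborhood of $E$) this structure agrees with the original $\J$ on $M\setminus V$, giving the first assertion of the theorem. Differentiably $\widetilde{M}$ is the expected blow-up since, inside $N^\ast Y$, the reduction $\widetilde{N^\ast Y}_\epsilon$ is diffeomorphic to the blow-up of $Y$ in $N^\ast Y$ via the map $\kappa$ recalled at the start of the section.

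For the independence statement, the choices involved are: the Hermitian structure $(g,I)$ used to build $\lambda$ and $\sigma$, the connection $\nabla$ in Theorem \ref{21:48:41} (hence $exp$ and $B$), and the cut value $\epsilon$. I would connect any two sets of choices by a smooth path and follow that path through the construction. For the connection and the $B$-field, Lemma \ref{17:14:20} and the final paragraph of the proof of Theorem \ref{21:48:41} provide an isotopy $(exp^t,B^t)$ fixing $Y$ to first order. For the closed extension $\sigma$, Lemma \ref{20:58:39} provides an isotopy $\Phi_t$ intertwining the structures $\J_{\sigma_0}$ and $\J_{\sigma_1}$ on a neighborhood of $Y$; together these deform the $\J_\sigma$ piece of the construction through mutually isomorphic generalized complex structures. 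Changing $\epsilon$ within a range where the cut is defined produces a one-parameter family of blow-ups whose restrictions to the unchanged exterior piece are identical, and whose cut pieces are related by the obvious scaling of $\omega_{st}$ on $\C$. Combining these deformations yields a generalized complex deformation between the two blow-ups.

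The step I expect to be the main obstacle is the independence-of-choices part: the isotopies from Lemmas \ref{20:58:39} and \ref{17:14:20} only fix $Y$ to first order and are defined on a neighborhood, so some care is needed to upgrade them to global deformations of the glued blow-up that are compatible with the cut construction near $E$ (in particular, one must show that the moment-map reduction varies smoothly in the parameter and that the gluing domain can be kept open throughout the deformation). Everything else is essentially bookkeeping given the normal form and reduction results already established.
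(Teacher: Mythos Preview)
Your approach is essentially the paper's: use the normal form around $Y$, perform the symplectic-cut reduction on $\C\times N^\ast Y$, and glue via the slice map. The gluing formula and the ``perform the construction in families'' argument for deformation-independence are exactly what the paper does.

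There is one small inaccuracy worth flagging. You write that the generalized complex structure pulled back by $(exp,B)$ from Theorem~\ref{21:48:41} is $\J_\sigma$ with $\sigma=d\lambda$ the $S^1$-invariant extension built from a Hermitian structure. That is not quite what Theorem~\ref{21:48:41} gives you: the $\sigma$ produced there is $i^\ast\widetilde{\sigma}_1$ from Lemma~\ref{17:10:48}, which depends on the auxiliary connection $\nabla$ and has no reason to be $S^1$-invariant. To obtain an embedding that hits the \emph{specific} $S^1$-invariant $\J_\sigma$ needed for the moment-map reduction, you must also invoke Theorem~\ref{16:24:12} (i.e.\ Lemma~\ref{20:58:39}) to pass from one closed extension of $\omega_Y$ to another via an isotopy, and compose. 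The paper makes this explicit by citing both Theorems~\ref{16:24:12} and~\ref{21:48:41} to produce the holomorphic embedding $\iota:(U,\J_\sigma)\to M$. Once you insert that step, your argument matches the paper's.
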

\begin{proof}
Equip a neighborhood $U$ of $Y$ in $N^\ast Y$ with the generalized complex structure $J_\sigma$ where $\sigma$ is as above. By Theorems \ref{16:24:12} and \ref{21:48:41}, if $U$ is small enough there is a holomorphic embedding $\iota:(U,\J_\sigma)\rightarrow (\iota(U),\J)$ with $\iota(U)$ a neighborhood of $Y$ in $M$. Since $Y$ is compact there is an $\epsilon>0$ such that $\overline{B_\epsilon}\subset U$. Set $\widetilde{U}:=\varphi(U)\cup E$ and define the blow-up of $Y$ in $M$ by
\begin{align}
\label{14:57:13}
\widetilde{M}:=M\backslash \iota(\overline{B_\epsilon}) \underset{\iota\circ \varphi^{-1}}{\cup} \widetilde{U}.
\end{align}
Here the glueing takes place between $\widetilde{U}\backslash E$ and $\iota(U\backslash \overline{B_\epsilon})$. Any two different choices of generalized complex structures on $N^\ast Y$ from Theorem \ref{16:24:12} or embeddings into $M$ from Theorem \ref{21:48:41} are isotopic to each other, hence by performing the above construction in families we see that the resulting blow-ups are deformation equivalent. 
\end{proof}
\begin{rem}
$i).$ The drawback of defining $\widetilde{M}$ by (\ref{14:57:13}) is that there is no canonical blow-down map. It is certainly possible to define blow-down maps to $M$, but they are not particularly useful because they will not be holomorphic around the exceptional divisor.   
\newline
$ii).$ If $\J$ is symplectic around $Y$ then the resulting structure on the blow-up is actually independent of choices up to isotopy. This follows because the family of forms all represent the same cohomology class, as the divisors all carry the fixed prescribed symplectic area. It is unclear to the authors if a similar statement for Theorem \ref{20:28:30} holds as well.
\end{rem}

\begin{ex}
Let $(M,\J_1,\J_2)$ be a generalized K\"ahler manifold and $Y\hookrightarrow M$ a generalized Poisson submanifold for $\J_1$, i.e. $\J_1N^\ast Y=N^\ast Y$. Since $\langle \J_1\alpha, \J_2\alpha\rangle >0$ for all $\alpha\in N^\ast Y$, we see that $\J_2N^\ast Y\cap (N^\ast Y)^\perp=0$, i.e. $Y$ is a generalized Poisson transversal\footnote{In K\"ahler geometry this amounts to the well-known fact that a complex submanifold is automatically symplectic.} for $J_2$. In Example \ref{20:24:24} we discussed how the maximal torus in a compact even dimensional Lie group is a generalized Poisson submanifold for $\J_1$ which, because of the degeneracy condition, can almost never be blown up. With respect to $\J_2$ however there are no further restrictions, so all maximal tori can be blown up for $\J_2$. In \cite{GK} a more thorough investigation of these examples will be given and it will be shown that, if the maximal torus can be blown up for $\J_1$ and $\J_2$, then the result is again generalized K\"ahler. 
\end{ex}

\subsection{A remark on other types of submanifolds}
\label{11:46:45}

Our definition of a generalized complex submanifold is, besides a smoothness criterion, characterized by 
\begin{align}
\label{09:07:51}
JN^\ast Y\cap (N^\ast Y)^\perp \subset N^\ast Y. 
\end{align}
In the previous sections we investigated the blow-up theory of the two extreme cases, namely those for which the above inclusion is either an equality (the generalized Poisson case) or the intersection is zero (the generalized Poisson transversals). An obvious question at this point is whether the ``intermediate" cases admit a blow-up theory as well. The techniques we used for the generalized Poisson submanifolds and the generalized Poisson transversals are so different from each other that it does not seem we can use either of them when the type in the normal direction is mixed. We will now give an example where we can explicitly prove that there does not exist a blow-up. For that we will use the following 
\begin{prop}
Let $M$ be a compact $4$--dimensional generalized complex manifold of type $1$. Then the Euler characteristic $\chi(M)$ is even. 
\end{prop}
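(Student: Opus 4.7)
The plan is to exploit the transversely holomorphic foliation canonically carried by a $4$-dimensional type $1$ generalized complex manifold. By the normal form theorem (Theorem \ref{16:12:53}), every point has a neighborhood isomorphic to $(\R^2,\omega_{st}) \times (\C,\sigma)$ for some holomorphic Poisson structure $\sigma$ on $\C$; but $\Lambda^2 T^{1,0}\C = 0$ forces $\sigma=0$, so the local model is simply $(\R^2,\omega_{st}) \times \C$. Consequently $\pi_\J$ has constant rank $2$, and $D := \mathrm{Im}(\pi_\J) \subset TM$ is an integrable rank-$2$ subbundle whose leaves are the symplectic leaves. Transition maps between two such charts are generalized holomorphic and hence Poisson (Remark \ref{13:34}), so they preserve $D$ and descend to generalized holomorphic maps between the transverse $\C$-factors. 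Since a type $1$ generalized complex structure on a real $2$-manifold is just an ordinary complex structure, these descended maps are biholomorphisms, so the foliation is transversely holomorphic. Thus $\nu := TM/D$ is a complex line bundle (dual to $\nu_\J = T^\ast M \cap \J T^\ast M$), and $D$ itself is a complex line bundle via its leafwise symplectic form.

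Choose a real splitting $TM \cong D \oplus \nu$. Then $e(TM) = c_1(D) \cup c_1(\nu)$ and the Whitney product formula gives $w_2(TM) \equiv c_1(D) + c_1(\nu) \pmod 2$; in particular
\[
\chi(M) = \langle c_1(D) \cup c_1(\nu),\,[M] \rangle.
\]
Bott's vanishing theorem for transversely holomorphic foliations of complex codimension $1$ yields $c_1(\nu)^2 = 0$ in $H^4(M;\R)$, and since $H^4(M;\Z)\cong\Z$ is torsion-free this passes to $H^4(M;\Z/2)$. Combining with the Wu formula $x\cup x \equiv w_2(TM)\cup x \pmod 2$ for $x \in H^2(M;\Z/2)$ applied to $x = c_1(\nu)$, one obtains
\[
0 \equiv c_1(\nu)^2 \equiv c_1(\nu)\cup w_2(TM) \equiv c_1(\nu)\cup c_1(D) + c_1(\nu)^2 \pmod 2,
\]
so $c_1(D)\cup c_1(\nu) \equiv 0 \pmod 2$. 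Pairing with $[M]$ gives $\chi(M) \equiv 0 \pmod 2$.

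The main obstacle is verifying that the foliation is transversely \emph{holomorphic} and not merely transversely complex, which is precisely the hypothesis making Bott vanishing applicable; this is handled by the normal form argument in the first paragraph. Once it is in place, the proof becomes a standard interplay between Bott vanishing and the Wu formula on a closed oriented $4$-manifold.
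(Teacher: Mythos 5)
Your proof is correct, and its core is the same as the paper's: split $TM$ into the tangent bundle $D$ of the (regular) symplectic foliation and a complex normal line bundle $\nu$, so that $\chi(M)=\langle c_1(D)\cup c_1(\nu),[M]\rangle$ and $w_2(TM)\equiv c_1(D)+c_1(\nu)\pmod 2$, and then invoke Wu's formula. The difference is that the paper applies Wu to $\alpha=c_1(D)$ (its $L_1$), obtaining $c_1(D)^2\equiv c_1(D)^2+c_1(D)\cup c_1(\nu)$ and hence $c_1(D)\cup c_1(\nu)\equiv 0\pmod 2$ by cancellation, with no further input. Your detour through Bott vanishing is logically superfluous for exactly the same reason: in your displayed chain the Wu step already reads $c_1(\nu)^2\equiv c_1(\nu)\cup c_1(D)+c_1(\nu)^2\pmod 2$, so the two $c_1(\nu)^2$ terms cancel and you get $c_1(\nu)\cup c_1(D)\equiv 0\pmod 2$ without ever knowing that $c_1(\nu)^2=0$. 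Consequently, the only part of your first paragraph that is genuinely needed is the observation that the local model is $(\R^2,\omega_{st})\times\C$ with vanishing Poisson bivector, which shows the foliation is regular and that $D$ and $\nu$ carry complex (equivalently, oriented) structures; the transverse holomorphicity of the transition maps, which you correctly flag as the delicate hypothesis for Bott vanishing, can be dropped along with Bott vanishing itself. What your longer route buys, if anything, is the a priori interesting fact $c_1(\nu)^2=0$ in real cohomology, but it is not needed for the parity statement.
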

\begin{proof}
A type $1$ structure gives rise to a decomposition $TM=L_1\oplus L_2$ where $L_1$ is the distribution tangent to the symplectic foliation and $L_2$ is a choice of normal bundle. In particular $L_1$ and $L_2$ are orientable and we can think of them as complex line bundles\footnote{In fact $L_2$ inherits a canonical almost complex structure, being the normal to the symplectic foliation in a generalized complex manifold.}, giving an almost complex structure on $TM$. By Wu's formula, using that $c_1(TM)\equiv w_2(M)$ mod $2$ and $c_1(TM)=c_1(L_1)+c_1(L_2)$, we obtain 
\begin{align*}
\alpha^2\equiv \alpha\cup c_1(L_1)+\alpha\cup c_1(L_2) \ \text{mod} \  2 \  \ \ \forall \alpha\in H^2(M,\mathbb{Z}).
\end{align*}
 Applying this to $\alpha=c_1(L_1)$ we see indeed that $\chi(M)=c_1(L_1)c_1(L_2)$ is even. 
\end{proof}
\noindent Now let $M$ be a compact $4$--dimensional generalized complex manifold of type $1$. The blow-up of a point in $M$ is differentiably given by $M\#\overline{\C\mbP}^2$, which has Euler characteristic $\chi(M)+1$. If the blow-up would have a generalized complex structure that agrees with the one on $M$ outside a neighborhood of the exceptional divisor, it would have type $1$ everywhere since the type can only change in even amounts. By the Proposition we conclude that the blow-up can not be generalized complex, at least not in a way that is reasonably related to the original structure on $M$. 

In the example above, Equation (\ref{09:07:51}) is neither zero nor an equality. There are however, generalized complex submanifolds $Y$ for which (\ref{09:07:51}) is zero at some points and an equality at others. Further study is needed to see what can be said about these types of submanifolds. 




\bibliographystyle{hyperamsplain}
\bibliography{references}

\end{document}